\documentclass[10pt,leqeq]{article}
\usepackage{amssymb,amsthm,mathrsfs}
\usepackage[utf8]{inputenc}
\usepackage{youngtab}
\usepackage{amsmath}
\usepackage{amscd}
\usepackage{xy}
\xyoption{all}
\usepackage[dvips]{graphicx}
\usepackage{graphpap}
\usepackage{youngtab}
\usepackage{pstricks}
\usepackage{pst-all}
\usepackage{pstcol}
\usepackage{color}
\usepackage[T1]{fontenc}
\usepackage{tgbonum}
\usepackage{tikz}
\usetikzlibrary{decorations.pathreplacing}

\usepackage{ytableau}
\setlength{\topmargin}{-0.2in} \setlength{\oddsidemargin}{0.2in}
\setlength{\textwidth}{6.25in} \setlength{\textheight}{8.5in}
\setlength{\unitlength}{0.6pt}
\newtheorem{thm}{Theorem}
\newtheorem{lem}[thm]{Lemma}
\newtheorem{cor}[thm]{Corollary}
\newtheorem{prop}[thm]{Proposition} 
\newtheorem{rem}[thm]{Remark}

\newtheorem{exmp}[thm]{Example}

\date{}
\begin{document}
\setlength{\baselineskip}{16pt}
\title{A Continuous Analogue for Young Diagrams}
\author{Rafael D\'iaz}
\maketitle

\begin{abstract}
We build a continuous analogue for Young diagrams, thought of as left-aligned stairs, following the line of research initiated by D\'iaz and Cano on the construction of continuous analogues for combinatorial objects.
\end{abstract}

\section{Introduction}

In this work we continue our program to uncover continuous analogues for combinatorial objects with the methodology introduced in joint work with Cano \cite{cd, dc}, \ and futher developed and applied by Cano and Carrillo \cite{cc}, \ D\'iaz \cite{d}, \
Le, Robins, Vignat and Wakhare \cite{lrvw}, \ O'Dwyer \cite{rod}, \ Vignat and Wakhare \cite{vw}. This methodology is based on four main ideas:

\begin{itemize}
   \item It is often the case that a combinatorial object comes with a bijection with the lattice points of a convex polytope. Of course such a bijection need not be unique, nevertheless, in some cases it does arise in a seamless fashion. In any case, our construction of continuous analogues depends on the choice of such  bijections. 

  \item There is a deep, albeit subtle, relation between counting lattice points and computing the volume of a convex polytope. This remark have a long history, and have been developed by several authors through the years.  For more on this topic the reader may consult
      Barvinok \cite{b}, Beck and Robins \cite{ber}, Brion \cite{br}, De Loera \cite{de}, Ehrhart \cite{e}, Guillemin \cite{g}, and the references therein. 
  
  \item Sometimes a combinatorial object comes with a bijection with the lattice points of a finite family of convex polytopes of various dimensions; in the construction of the continuous analogue it may be more natural to consider countable families of such polytopes.
      
  \item  Thus, in the positive case where the above considerations apply, the continuous analogue of the cardinality of a finite set turns out to be the convergent infinite sum of the volume of polytopes of various dimensions depending on some continuous parameters.
  
  \end{itemize}

\

In this work we focus on finding a continuous analogue for Young diagrams, which are combinatorial objects with applications in the theory of numerical partitions \cite{ae}, and in the representation theory of permutation groups  \cite{ap}, among others. If one thinks of a Young diagrams as left-aligned stairs built with blocks of integral length and height $\ 1, \ $ the continuous analogue considered in this work are left-aligned stairs built with blocks  whose length and height are non-negative real numbers. It is clear that there are an uncountable number of such stairs, however, we are going to show that these continuous stairs come with enough structure so that their volume rather than their cardinality can be measured.

\

\section{A continuous analogue for compositions}

We begin constructing a continuous analogue for compositions within the settings outlined in the introduction. For the purposes of this work, this case is of interest primarily as an illustration of the techniques and conventions to be used in other sections.\\

For $\ x\in \mathbb{R}_{> 0}, \ k \in  \mathbb{N}_{> 0}\ $ let the $\ k $-simplex $\ \Delta^k_x \subseteq\mathbb{R}^k\ $ be the convex polytope such that $\ (x_1,\dots,x_k) \in \Delta^k_x\ $  
if and only if  $\  0\leq x_1 \leq \cdots \leq x_k \leq x. \ $ 
The simplex $\ \Delta^k_x\ $  inherits a Riemannian metric from $\ \mathbb{R}^k,\ $  which gives rise to a volume form, and the computation of integrals. The $\ k $-simplex $\ \Delta^k_x \ $ is often given in affine coordinates as the set of tuples $\ (z_0,\dots,z_k)\in \mathbb{R}_{\geq 0}^k\ $ such that  $\ z_0+ \cdots + z_k=x .\ $ In such cases we shall always work with Cartesian coordinates $\ (x_1,\dots,x_k)\ $ given by $\ x_i=z_0+ \cdots + z_{i-1}. \ $   
Affine coordinates can be recovered from Cartesian coordinates as $\ z_i= x_{i+1}-x_i \ $ where we set  $\ x_0=0, \  x_{k+1}=x. \ $  We let 
$\ \Delta^0_x \ $ be a single point, from the affine representation we see that such point may be identified with  $\ \{x\}.$\\

Recall \cite{ae} that a numerical composition of length $\ k+1\ $ of positive integer $\  n \ $  is a sequence $\ (l_0,\dots,l_k)\ $ of positive integers such that $\ l_0+ \cdots + l_k=n .\ $ Using Cartesian coordinates 
$\ n_i=l_0+ \cdots + l_{i-1} ,\ $ we see that compositions are in bijective correspondence with  tuples  of integers $\ 0 < n_1 < \cdots < n_k < n.\ $ In other words, compositions are in bijective correspondence with the interior ( not in the boundary ) integral points of  the convex polytope $\ \Delta^k_n \subseteq\mathbb{R}^k. \ $ \\

We think of the volume $\ \displaystyle \mathrm{vol}(\Delta^k_x)= \frac{x^k}{k!}\ $ of $\ \Delta^k_x \ $ as a continuous analogue for the number 
$\ \displaystyle c_{k+1}(n)= \binom{n-1}{k} \ $ of compositions of length $\ k+1 \ $ of $\ n. \ $  The total number of compositions is given by  
$$\ c(n)\ = \  \sum_{k=0}^{n-1} c_{k+1}(n) \ = \ \sum_{k=0}^{n-1}\binom{n-1}{k} \ = \ 2^{n-1}.  $$ 
A continuous analogue $\ \kappa(x) \ $ for $\ c(n) \ $ is built from the replacements
$\ n >0 \ \rightarrow \ x>0, \  $ $\ c_{k+1}(n) \ \rightarrow \ \mathrm{vol}(\Delta^k_x) ,\ $ $\ \sum_{k=0}^{n-1}\ \rightarrow \ \sum_{k=1}^{\infty}, \ $
thus for $\ x\in \mathbb{R}_{> 0}\ $ the function $\ \kappa(x) \ $  is given by
$$\kappa(x)\ = \ \sum_{k=1}^{\infty}\mathrm{vol}(\Delta^k_x) \ = \  \sum_{k=1}^{\infty}\frac{x^k}{k!} \ =  \ e^{x}-1,$$
i.e. the continuous analogue for the discrete exponential  
$\ 2^{n-1} \ $ turns out to be the continuous exponential
$\ e^{x}-1. \ $ Note that the exponential generating function of the composition numbers  $\ c(n) \ $ also gives rise to the continuous exponential function, indeed
$$\sum_{n=1}^{\infty}c(n)\frac{x^{n} }{n!} \ = \ \frac{1}{2} \sum_{n=1}^{\infty}\frac{(2x)^{n}}{n!} \ = \ \frac{1}{2} (e^{2x}-1)
\ = \ \frac{\kappa(2x)}{2}  .$$

\

\section{Integrals on the $\ k$-simplex $\ \Delta^k_x$} 

In this section we review some useful facts on integration on the $\ k$-simplex $\ \Delta^k_x . \ $
Just as $\ \mathrm{vol}(\Delta^k_x) \ $ is though of as a continuous analogue for the composition numbers  $\  c_{k+1}(n), \ $  we regard integrals of the form

$$\int_{\Delta^k_x}f(x_1,\dots,x_k)dx_1\dots dx_k \ \ = \ \ 
\int_{0}^{x}\bigg(\int_{\Delta^{k-1}_{x_{k}}}f(x_1,\dots,x_k)dx_1\dots dx_{k-1} \bigg) dx_k  $$

\noindent as continuous analogues for sums of the form $$\sum_{\mathbb{Z}^k \cap \Delta^{k,\mathrm{open}}_n} f(n_1,\dots,n_k)\ \ = \ \ \sum_{0< n_1 < \cdots < n_k<n} f(n_1,\dots,n_k).$$

\

\noindent \textbf{Notation:} For $\ a=(a_1,\dots,a_k), \ $  set $\ x^a=x_1^{a_1}\cdots x_k^{a_k},\ $
$ dx=dx_1\cdots dx_k, \ $ $ a!=a_1!\cdots a_k!,\ $ $ x^{(a)}=\frac{x^a}{a!} \ $ so  $\ x^{(a)}x^{(b)}=\binom{a+b}{b}x^{(a+b)}. \ $ For $\ 1 \leq i \leq k-1 \ $
set $\ |a|_i=a_1+\cdots + a_i, \ $ and $|a|=|a|_k = a_1+\cdots + a_k.\ $
Let $\ (x)^{(k)}=x(x+1)\cdots (x+k-1)\ $ be the raising factorial, and 
$\ (x)_{(k)}=x(x-1)\cdots (x-k+1)\ $  be the lowering factorial.\\

\begin{lem}\label{l1}
{\em For $\ k \in \mathbb{N}_{\geq 1}, \ a=(a_1,\dots,a_k)\in \mathbb{N}^k, \ $ we have that
$$\int_{\Delta^k_x}x_1^{a_1}\dots x_k^{a_k}dx_1\dots dx_k \ \ = \ \ 
\frac{x^{|a| + k}}{\prod_{i=1}^{k}\big(|a|_i+i\big)} \ \ \ \ \ \ \ \ \ \ \ \ \ $$
$$\int_{\Delta^k_x}x_1^{(a_1)}\dots x_k^{(a_k)}dx_1\dots dx_k \ \ = \ \ 
\prod_{i=2}^{k}\binom{|a|_i+i-1}{a_i}\ x^{(|a|+k)}$$  }
\end{lem}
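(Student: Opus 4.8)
The plan is to prove both identities by induction on $\ k, \ $ using the recursive description of the iterated integral $\ \int_{\Delta^k_x}f\,dx=\int_0^x\big(\int_{\Delta^{k-1}_{x_k}}f\,dx_1\cdots dx_{k-1}\big)\,dx_k\ $ recorded at the start of this section. For the first identity the base case $\ k=1\ $ is the elementary evaluation $\ \int_0^x x_1^{a_1}\,dx_1=\frac{x^{a_1+1}}{a_1+1}, \ $ which agrees with the claimed formula since $\ |a|_1=a_1. \ $ For the inductive step I would feed the hypothesis into the inner $\ (k-1)$-fold integral with upper limit $\ x_k, \ $ obtaining $\ x_k^{|a|_{k-1}+(k-1)}/\prod_{i=1}^{k-1}(|a|_i+i); \ $ multiplying by $\ x_k^{a_k}\ $ and integrating from $\ 0\ $ to $\ x\ $ raises the exponent and divides by one new factor. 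The step then reduces to the bookkeeping identities $\ |a|_{k-1}+a_k=|a|_k=|a|\ $ (which collapses the exponent to $\ |a|+k\ $) and $\ |a|+k=|a|_k+k\ $ (which identifies the new denominator factor as the missing $\ i=k\ $ term of $\ \prod_{i=1}^{k}(|a|_i+i)$).

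For the second identity I would run a parallel induction phrased entirely in the divided-power notation $\ x^{(a)}=x^a/a!, \ $ which is exactly what makes the binomial coefficients surface. The one extra ingredient is the base fact $\ \int_0^x y^{(m)}\,dy=x^{(m+1)}, \ $ immediate from the case $\ k=1. \ $ In the inductive step, after extracting the constant product $\ \prod_{i=2}^{k-1}\binom{|a|_i+i-1}{a_i}\ $ delivered by the hypothesis, the remaining inner integral is $\ \int_0^x x_k^{(a_k)}\,x_k^{(|a|_{k-1}+k-1)}\,dx_k. \ $ Applying the multiplication rule $\ x^{(a)}x^{(b)}=\binom{a+b}{b}x^{(a+b)}\ $ with $\ b=|a|_{k-1}+k-1\ $ produces precisely the factor $\ \binom{a_k+|a|_{k-1}+k-1}{a_k}=\binom{|a|_k+k-1}{a_k}, \ $ namely the $\ i=k\ $ term of the target product, and collapses the integrand to a single divided power $\ x_k^{(|a|+k-1)}; \ $ the base fact then integrates this to $\ x^{(|a|+k)}, \ $ completing the step.

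The main obstacle here is not analytic but combinatorial: it is the index bookkeeping. I would need to check carefully that the partial sums $\ |a|_i\ $ transform correctly under the recursion — in particular that the binomial argument generated at stage $\ k\ $ is $\ \binom{|a|_k+k-1}{a_k}\ $ and not a neighbouring value — and that the symmetry $\ \binom{a_k+b}{b}=\binom{a_k+b}{a_k}\ $ together with $\ a_k+|a|_{k-1}=|a|_k\ $ lines the coefficient up with the convention $\ \binom{|a|_i+i-1}{a_i}\ $ used in the statement. As an alternative to the second induction one can instead deduce the second identity from the first, writing $\ x_i^{a_i}=a_i!\,x_i^{(a_i)}\ $ and $\ x^{(|a|+k)}=x^{|a|+k}/(|a|+k)!, \ $ whereupon the claim reduces to the numerical identity $\ \prod_{i=2}^{k}\binom{|a|_i+i-1}{a_i}=\frac{(|a|+k)!}{a!\,\prod_{i=1}^{k}(|a|_i+i)}; \ $ expanding each binomial via $\ |a|_i-a_i=|a|_{i-1}\ $ and using $\ (c-1)!/c!=1/c\ $ makes the factorials telescope. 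I find the divided-power induction cleaner, since it keeps the argument self-contained within the notation just introduced.
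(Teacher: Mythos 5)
Your proposal is correct and follows exactly the route the paper indicates: its proof states only that ``both identities are equivalent and follow by induction on $k$,'' and your two inductions (plus the telescoping argument showing the second identity is equivalent to the first) are precisely the details being compressed into that sentence. The index bookkeeping you flag does check out — in the inductive step the inner integral over $\Delta^{k-1}_{x_k}$ involves the same partial sums $|a|_i$, $i\leq k-1$, as the original tuple, so the factor $\binom{|a|_k+k-1}{a_k}$ and the denominator term $|a|_k+k$ emerge exactly as you describe.
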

\begin{proof}
Both identities are equivalent and follow by induction on $\ k.$
\end{proof}

\

\begin{exmp}
{\em For $\ k \geq 1, \ n \geq k+1, \ $ consider the numbers
$$T_{k,n} \ = \ \sum_{0<n_1 <  \cdots < n_k < n} n_1 \cdots n_k
\ \ \ \ \ \mbox{determined by the recursion}$$
$$T_{1,n}= \frac{n^2}{2}-\frac{n}{2} \ \ \ \ \mbox{and} \ \ \ \ T_{k+1,n}  =  \sum_{i=k+1}^{n-1}i\ T_{k,i}.$$
From Lemma \ref{l1} the continuous analogue for $\ T_{k,n} \ $  are given by  
$$\ \int_{\Delta^k_x}x_1\dots x_k dx_1\dots dx_k \ = \  
\frac{x^{2k}}{\prod_{i=1}^{k}\big(2i\big)} \ = \ \frac{(\frac{x^2}{2 \ })^k}{k!} . \ \ \ \ \ \  \mbox{Therefore}$$
$$\sum_{k=1}^{\infty}  \int_{\Delta^k_x}x_1\dots x_k dx_1\dots dx_k \ = \ 
\sum_{k=1}^{\infty}\frac{(\frac{x^2}{2})^k}{k!} \ = \ e^{\frac{x^2}{2\ }}-1 .$$
}
\end{exmp}

\

The following result follows from  Lemma \ref{l1}.\\

\begin{thm}
{\em Assuming sums  and integrals can be interchanged we have that
$$\int_{\Delta^k_x}\bigg( \sum_{a\in \mathbb{N}^k}f_a x^a \bigg)dx \  = \
\sum_{n=k}^{\infty}\bigg(\sum_{|a|=n-k} \frac{f_a}{\prod_{i=1}^{k}\big(|a|_i+i\big)} \bigg)x^n .$$
$$\int_{\Delta^k_x}\bigg( \sum_{a\in \mathbb{N}^k}f_a x^{(a)} \bigg)dx \  = \
\sum_{n=k}^{\infty}\bigg(\sum_{|a|=n-k} \prod_{i=2}^{k}\binom{|a|_i+i-1}{a_i}f_a  \bigg)  x^{(n)}.$$
}
\end{thm}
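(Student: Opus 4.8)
The plan is to reduce both identities to the single-monomial computations of Lemma~\ref{l1} and then reorganize the resulting series by total degree. First I would invoke the stated hypothesis to interchange summation and integration, writing
$$\int_{\Delta^k_x}\bigg(\sum_{a\in\mathbb{N}^k}f_a x^a\bigg)dx \ = \ \sum_{a\in\mathbb{N}^k}f_a\int_{\Delta^k_x}x^a\,dx,$$
and similarly for the divided powers $x^{(a)}$ in the second identity. This reduces everything to integrating each term $x^a$ (respectively $x^{(a)}$) on its own.

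Next I would apply Lemma~\ref{l1} term by term: the first identity gives $\int_{\Delta^k_x}x^a\,dx = x^{|a|+k}/\prod_{i=1}^k(|a|_i+i)$, while the second gives $\int_{\Delta^k_x}x^{(a)}\,dx = \prod_{i=2}^k\binom{|a|_i+i-1}{a_i}\,x^{(|a|+k)}$. The structural point that drives the rest of the argument is that the power produced, $x^{|a|+k}$ (respectively the divided power $x^{(|a|+k)}$), depends on $a$ only through the total $|a|$, whereas the accompanying coefficient depends on the full flag of partial sums $|a|_i$. Thus all multi-indices with a common value of $|a|$ contribute to the same power of $x$ and should be pooled, but their coefficients admit no further simplification in general.

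Finally I would reindex the single sum over $a\in\mathbb{N}^k$ as a double sum by setting $n=|a|+k$. Since $a\in\mathbb{N}^k$ forces $|a|\ge 0$, the exponent $n$ ranges over $n\ge k$, and for each such $n$ the inner index set $\{a\in\mathbb{N}^k : |a|=n-k\}$ is finite. Collecting terms,
$$\sum_{a\in\mathbb{N}^k}f_a\frac{x^{|a|+k}}{\prod_{i=1}^k(|a|_i+i)} \ = \ \sum_{n=k}^{\infty}\bigg(\sum_{|a|=n-k}\frac{f_a}{\prod_{i=1}^k(|a|_i+i)}\bigg)x^n,$$
which is the first assertion; the same regrouping applied to the divided-power computation yields the second.

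Because the interchange of summation and integration is assumed outright, there is no real analytic obstacle, and the hard part reduces to bookkeeping: confirming that the series begins at $n=k$ and that pooling by $|a|$ is legitimate precisely because the power of $x$ is constant on each level set $\{|a|=n-k\}$. A rigorous version would still owe a justification of the interchange — for instance absolute convergence of $\sum_a f_a x^a$ on the relevant interval together with positivity of the integrand on $\Delta^k_x$ — but that is exactly what the hypothesis waives.
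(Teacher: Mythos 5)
Your proposal is correct and is exactly the argument the paper intends: its proof consists of the single remark that the theorem follows from Lemma \ref{l1}, i.e.\ interchange sum and integral (the stated hypothesis), integrate each monomial $x^a$ or $x^{(a)}$ via the lemma, and regroup by the total degree $n=|a|+k$. You have simply written out the bookkeeping that the paper leaves implicit.
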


\

\begin{exmp}
{\em  Consider the series $\ \sum_{a\in \mathbb{N}}f_a x_i^a \ $  for  
$ \ 1 \leq i \leq k\ $ fixed. We have that
$$\int_{\Delta^k_x}\bigg(\sum_{a\in \mathbb{N}}f_a x_i^a\bigg)dx \  = \
\sum_{n=k}^{\infty}\frac{(i)^{(n-k)}}{n!}f_{n-k}x^n. \ \ \ \ \ 
\mbox{Similarly} $$ 
$$\int_{\Delta^k_x}\bigg(\sum_{a\in \mathbb{N}}f_a x_i^{(a)}\bigg)dx \  = \
\sum_{n=k}^{\infty}\binom{n-k+i-1}{i-1}f_{n-k}x^{(n)} .$$
In particular the $\ k\ $ antiderivative of $\ f(x)= \sum_{a\in \mathbb{N}}f_a x^{(a)} \ $ is given by
$$\int_{0}^{x}\cdots \int_{0}^{x_2}f(x_1)dx_1 \cdots dx_{k} \ = \
\int_{\Delta^k_x}\bigg(\sum_{a\in \mathbb{N}}f_a x_1^{(a)}\bigg)dx \  = \
\sum_{n=k}^{\infty}f_{n-k}x^{(n)} .$$

}
\end{exmp}

\

\begin{lem}\label{l5}
{\em For $\ k\in \mathbb{N}, \ a=(a_1,\dots,a_{k+1})\in \mathbb{N}^{k+1}, \ $ we have that
$$\int_{\Delta^k_x}x_1^{a_1}(x_2-x_1)^{a_2} \cdots (x-x_k)^{a_{k+1}}dx_1\dots dx_k \  = \ 
\frac{(a_1)!\cdots(a_{k+1})!}{\big(|a| + k\big)!}x^{|a| + k}  $$
$$\int_{\Delta^k_x}x_1^{(a_1)}(x_2-x_1)^{(a_2)} \cdots (x-x_k)^{(a_{k+1})}dx_1\dots dx_k \  =  \  x^{(|a|+k)} $$  }
\end{lem}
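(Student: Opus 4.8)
The plan is to prove both identities by induction on $k$, peeling off the outermost integration variable $x_k$, which reduces the $k$-dimensional integral over $\Delta^k_x$ to a single integral of a $(k-1)$-dimensional integral over $\Delta^{k-1}_{x_k}$ via the iterated-integral formula recorded at the start of Section 3. The key observation that makes this work is that the integrand has a product structure adapted to the affine coordinates $z_i = x_{i+1}-x_i$: the factors $x_1^{a_1},\,(x_2-x_1)^{a_2},\dots,(x-x_k)^{a_{k+1}}$ are exactly $z_0^{a_1} z_1^{a_2}\cdots z_k^{a_{k+1}}$, so the integral is the standard Dirichlet integral over the simplex. I would make the affine-coordinate interpretation explicit first, since it is the conceptual heart of the statement and immediately suggests the clean form of the answer.

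First I would establish the base case $k=0$, where $\Delta^0_x$ is the single point $\{x\}$ and the integrand is $(x-x_0)^{a_1} = x^{a_1}$ with $x_0=0$; both sides reduce to $x^{a_1}$ (respectively $x^{(a_1)}$), so the identities hold trivially. For the inductive step on the first identity, I would write the integral over $\Delta^k_x$ as $\int_0^x (x-x_k)^{a_{k+1}}\big(\int_{\Delta^{k-1}_{x_k}} x_1^{a_1}\cdots(x_k-x_{k-1})^{a_k}\,dx_1\cdots dx_{k-1}\big)\,dx_k$. The inner integral, by the induction hypothesis applied with upper limit $x_k$ and the reduced exponent tuple $(a_1,\dots,a_k)$, evaluates to $\frac{a_1!\cdots a_k!}{(|a|-a_{k+1}+k-1)!}\,x_k^{\,|a|-a_{k+1}+k-1}$. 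The remaining one-dimensional integral $\int_0^x (x-x_k)^{a_{k+1}} x_k^{\,m}\,dx_k$ is a Beta integral evaluating to $\frac{a_{k+1}!\,m!}{(a_{k+1}+m+1)!}\,x^{a_{k+1}+m+1}$, and substituting $m = |a|-a_{k+1}+k-1$ collapses the factorials to the asserted $\frac{a_1!\cdots a_{k+1}!}{(|a|+k)!}\,x^{|a|+k}$.

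For the second identity, the cleanest route is to observe it is equivalent to the first: dividing the first identity through by $a_1!\cdots a_{k+1}!$ converts each $x_i^{a_i}$ into $x_i^{(a_i)}$ and turns the right-hand side into $\frac{1}{(|a|+k)!}\,x^{|a|+k} = x^{(|a|+k)}$, using the notation $x^{(a)}=x^a/a!$. I would note this equivalence rather than rerun the induction, paralleling the remark in Lemma \ref{l1} that the two forms are equivalent.

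The main obstacle I anticipate is purely bookkeeping: tracking the exponent $|a|-a_{k+1}+k-1$ correctly through the induction hypothesis, since the outer variable removes both one summand $a_{k+1}$ from the total degree and one unit from the dimension count, and then verifying that the Beta-integral factorials telescope cleanly into $(|a|+k)!$. There is no deep difficulty here; the only care needed is to confirm that the reduced tuple still satisfies the hypotheses of the lemma at the lower level and that the affine-coordinate rewriting $(x-x_k)^{a_{k+1}} = z_k^{a_{k+1}}$ matches the boundary convention $x_{k+1}=x$ fixed in Section 2.
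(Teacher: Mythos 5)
Your proposal is correct and follows essentially the same route as the paper, which proves Lemma \ref{l5} by induction on $k$ together with the observation that the two identities are equivalent (the paper simply omits the details you supply). Your inductive step --- peeling off $x_k$ via the iterated-integral formula, applying the induction hypothesis to the tuple $(a_1,\dots,a_k)$, and evaluating the resulting Beta integral so the factorials telescope to $(|a|+k)!$ --- is exactly the computation the paper's one-line proof leaves to the reader.
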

\begin{proof}
Both identities are equivalent and follow by induction on $\ k.$ 
\end{proof}

\

The following result follows from  Lemma \ref{l5}. \\

\begin{thm}
{\em Assuming sums and integrals can be interchanged we have that  
$$\int_{\Delta^k_x}\bigg( \sum_{a\in \mathbb{N}^{k+1}}f_a x_1^{a_1}(x_2-x_1)^{a_2} \cdots (x-x_k)^{a_{k+1}} \bigg)dx \  = \
\sum_{n=k}^{\infty}\bigg(  \sum_{|a|=n-k} \frac{(a_1)!\cdots(a_{k+1})!}{\big(|a| + k\big)!}f_a \bigg) x^n.$$
$$\int_{\Delta^k_x}\bigg( \sum_{a\in \mathbb{N}^{k+1}}f_a x_1^{(a_1)}(x_2-x_1)^{(a_2)} \cdots (x-x_k)^{(a_{k+1})} \bigg)dx \  = \
\sum_{n=k}^{\infty}\bigg(  \sum_{|a|=n-k} f_a \bigg)  x^{(n)}.$$ }
\end{thm}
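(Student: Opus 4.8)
The plan is to mirror the proof of the earlier theorem following Lemma \ref{l1}: push the integral inside the sum, evaluate each resulting monomial integral by Lemma \ref{l5}, and then reindex the single sum that appears as a power series in $x$ (respectively in the raising-factorial basis $x^{(n)}$). Since the statement explicitly grants the interchange of summation and integration, no convergence analysis is needed, and what remains is pure bookkeeping.

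First I would treat the first identity. By the assumed interchange,
$$\int_{\Delta^k_x}\bigg( \sum_{a\in \mathbb{N}^{k+1}}f_a x_1^{a_1}(x_2-x_1)^{a_2} \cdots (x-x_k)^{a_{k+1}} \bigg)dx \  = \ \sum_{a\in \mathbb{N}^{k+1}}f_a\int_{\Delta^k_x}x_1^{a_1}(x_2-x_1)^{a_2} \cdots (x-x_k)^{a_{k+1}}dx.$$
The first identity of Lemma \ref{l5} evaluates each of these integrals, turning the right-hand side into
$$\sum_{a\in \mathbb{N}^{k+1}}f_a\,\frac{(a_1)!\cdots(a_{k+1})!}{\big(|a| + k\big)!}\,x^{|a| + k}.$$
Next I would group the terms by the exponent of $x$. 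Each multi-index $a$ contributes the power $x^{|a|+k}$, so setting $n=|a|+k$ and noting that $|a|$ ranges over all of $\mathbb{N}$ forces $n$ to range over $\{k, k+1, \dots\}$; collecting all $a$ with $|a|=n-k$ yields exactly the claimed expression $\sum_{n=k}^{\infty}\big(\sum_{|a|=n-k}\frac{(a_1)!\cdots(a_{k+1})!}{(|a| + k)!}f_a\big)x^n$.

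For the second identity I would proceed identically, using instead the second identity of Lemma \ref{l5}, according to which each integral $\int_{\Delta^k_x}x_1^{(a_1)}(x_2-x_1)^{(a_2)} \cdots (x-x_k)^{(a_{k+1})}dx$ equals simply $x^{(|a|+k)}$. Interchanging and substituting gives $\sum_{a\in\mathbb{N}^{k+1}}f_a\,x^{(|a|+k)}$, and the same reindexing $n=|a|+k$ collects this into $\sum_{n=k}^{\infty}\big(\sum_{|a|=n-k}f_a\big)x^{(n)}$.

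The only conceptual point, namely the interchange of sum and integral, is assumed in the hypothesis, so the main, and quite mild, obstacle is simply verifying that the reindexing exhausts each power $x^n$ (respectively $x^{(n)}$) exactly once. This is immediate, since the level sets $\{a\in\mathbb{N}^{k+1} : |a|=n-k\}$ for $n\geq k$ partition $\mathbb{N}^{k+1}$, and the inner sums are finite because each such level set is finite. I would therefore present the argument for the first identity in full and remark that the second is verbatim, replacing the first line of Lemma \ref{l5} by the second.
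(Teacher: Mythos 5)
Your proposal is correct and matches the paper's intended argument: the paper simply states that the theorem follows from Lemma \ref{l5}, and your proof — interchanging sum and integral as permitted by hypothesis, evaluating each term via Lemma \ref{l5}, and reindexing by $n=|a|+k$ — is precisely that argument spelled out.
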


\

\begin{exmp}
{\em For $\ k \geq 1, \ n \geq k+1, \ $ consider the numbers
$$U_{k,n} \ = \ \sum_{0<n_1 <  \cdots < n_k < n} n_1(n_2-n_1) \cdots
(n_k-n_{k-1})(n-n_k) \ \ \ \ \  \mbox{determined by }$$
$$U_{1,n}= \frac{(n-1)n(n+1)}{6} \ \ \ \ \ \mbox{and}\ \ \ \ \ U_{k+1,n}  =  \sum_{i=k+1}^{n-1}(n-i)T_{k,i}.$$
The continuous analogue for $\ U_{k,n} \ $ are the functions
$$V_{k}(x) \ = \ \int_{\Delta^k_x}x_1(x_2-x_1) \cdots
(x_k-x_{k-1})(x-x_k) dx_1\dots dx_k \ \ \ 
\mbox{determined by} $$
$$V_{1}(x)= \frac{x^3}{6} \ \ \ \ \  \mbox{and} \ \ \ \ \ V_{k+1}(x)  =  \int_{0}^{x}(x-t)V_{k}(t)dt.$$
The functions $\ \displaystyle V_{k}(x)= \frac{x^{2k+1}}{(2k+1)!} \ $ solve the desired recursion.
}
\end{exmp}

\

\section{Lattice paths \ vs \ directed paths}

In this section we review some known facts on Young diagrams and numerical partitions, and introduce directed paths as continuous analogue for lattice paths. A Young diagram is a graphical representation for a partition $\ \lambda=(\lambda_{k},\cdots, \lambda_1)\in \mathbb{N}_{>0}^{k}\ $ of 
$\ a=\lambda_{k}+ \cdots + \lambda_1 \ $ where 
$\ \lambda_{k} \geq \cdots \geq \lambda_1 > 0. \ $ 
We assume that the southwesternmost point of a Young diagram $\ \lambda \ $ is placed at
the origin $ \ (0,0). \ $ The height $\ h_{\lambda}\ $ of  $\ \lambda \ $ is the number $ \ k\ $ of blocks, the width $\ w_{\lambda}\ $ of  $\ \lambda \ $ is equal to  $\ \lambda_{k}, \ $ the length of the longest block. The boundary of the underlying region of $\ \lambda \ $ is given by the vertical segment from $\ (0,0) \ $ to $\ (0,h_{\lambda}), \ $ the horizontal segment from $\ (0,h_{\lambda}) \ $ to
$\ (w_{\lambda}, h_{\lambda}), \ $ and the east-north lattice path $\ p_{\lambda}\ $ from $\ (0,0) \ $ to $\ (w_{\lambda}, h_{\lambda}). \ $ \\

Figure \ref{fig:yd}-a)  displays the Young diagram of the partition 
$\ \lambda=(6,5,3,3,1)\ $ of $\ 18 \ $ with $\ h_{\lambda}=5,\ $ and $\ w_{\lambda}=6.\ $   Thus $\ 18 \ $ is the area of the region bounded by
the segment  from $\ (0,0) \ $ to $\ (0,5), \ $ 
the segment from $\ (0,5) \ $ to
$\ (6, 5), \ $ and the $\ 11  $-steps east-north lattice path $\ p_{\lambda} \in \mathrm{L}(6,5,18):$
$$\big((0,0),(1,0),(1,1),(2,1),(3,1), (3,2),(3,3),(4,3),(5,3),(5,4),(6,4),(6,5)\big).$$
 
\begin{figure}
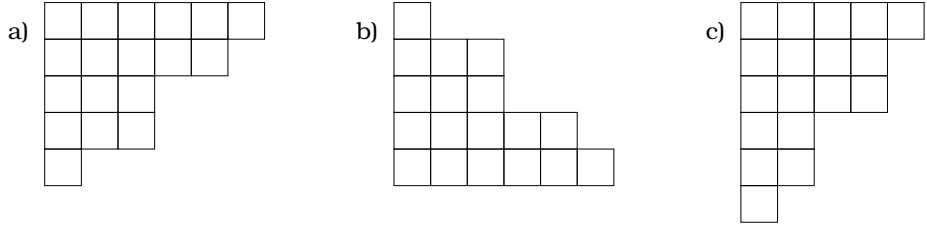

  $$\scalebox{0.9}{a) \ \ydiagram{6,5,3,3,1}\ \ \ \ \ \ \ \ \ \ \ \
 b) \ \ydiagram{1,3,3,5,6}
\ \ \ \ \ \ \ \ \ \ \ \    c) \ \ydiagram{5,4,4,2,2,1}}$$
  \caption{a) Young diagram $\lambda.$ \  b) Stair representation of  $\lambda.$  \ c) Conjugated Young diagram $\lambda^t.$ }
  \label{fig:yd}
\end{figure}
\noindent Figure \ref{fig:yd}-b) displays a Young diagram using the stair convention. We learn from this representation that Young diagrams can be thought as stairs built up placing square bricks on top of each other in a left-aligned fashion.\\

For $\ n,m \geq 1 \ $ let $\ \mathrm{Y}(m,n) \ $ be the set of Young diagrams with $\ n\ $ blocks such that each of its longest blocks has  $\ m \ $ boxes. Let $\ \mathrm{Y}(m,n,a)\ $ be the set of Young diagrams in $\ \mathrm{Y}(m,n) \ $ with  $\ a\ $ boxes. Let  
$\ \mathrm{L}(m,n) \ $ (\  $\mathrm{L}(m,n,a) \ $) be the set of lattice east-north paths  (\ enclosing a region of area $\ a\ $)  that  star at $\ (0,0),\ $ end at $\ (m,n),\ $ proceed by  steps of length $\ 1 \ $ either in the east or north direction, starting with an east direction step, and ending with an north direction step. Paths in  $\ \mathrm{L}(n,m) \ $ have length $\ m+n.\ $ The map sending a Young diagram $\ \lambda \ $ to its associated east-north lattice path $\ p_{\lambda} \ $ establishes a bijective correspondence between $\ \mathrm{Y}(m,n) \ $ and $\ \mathrm{L}(m,n), \ $ as well as between $\ \mathrm{Y}(m,n,a) \ $ and $\ \mathrm{L}(m,n,a).$ \\

Note that $ \  \big|\mathrm{Y}(m,n,a)\big|   \  $  is the cardinality of the set of numerical partitions of $\ a \ $ with $\ n \ $ summands such that each largest summand is equal to $ \ m, \ $ thus
$$ \ \big|\mathrm{Y}(m,n)\big|= \sum_{a=m+n-1}^{nm}\big|\mathrm{Y}(m,n,a)\big| . \ $$ Since a Young diagram $\ \lambda \ $ and its dual $\ \lambda^t, \ $
see Figure \ref{fig:yd}-c), have the same underlying area, and 
$\ h_{\lambda}= w_{\lambda^t}, \  w_{\lambda}= h_{\lambda^t}, \ $
we have that   
$\ \ \big|\mathrm{Y}(m,n,a)\big|=\big|\mathrm{Y}(n,m,a)\big| \ \ \  \mbox{and} \ \ \   \big|\mathrm{Y}(m,n)\big|= \big|\mathrm{Y}(n,m)\big|.$

\

To introduce our continuous analogues for Young diagrams we need the notion of  east-north directed paths, which is obtained from that of east-north lattice paths by lifting the restriction of length $\ 1 \ $ steps, i.e. directed paths are allowed to take steps of arbitrary length, including steps of length zero, but are constrained to move either in the east or north direction in alternating fashion. \\

For $\ (x,y)\in \mathbb{R}_{> 0}^2\ $ and $\ n\geq 1\ $ let $\ \mathrm{P}_n(x,y) \ $ be the set of $\ 2n \ $ steps east-north directed paths from $\ (0,0) \ $ to $\ (x,y).\ $
Thus a path $\ p\in \mathrm{P}_n(x,y) \ $ is determined by a $\ 2n+1\ $ tuple of points $\ p=(p_0,...,p_{2n}) \in \mathbb{R}_{\geq 0}^2\ $ such that:
\begin{itemize}
  \item $p_0=(0,0)\ $ and $\ p_{2n}=(x,y).$
  \item $p_{2i+1}=p_{2i}+(t,0)\ $ for some $\ t\in \mathbb{R}_{\geq 0}.$
  \item $p_{2i}=p_{2i-1}+(0,t)\ $ for some $\ t\in \mathbb{R}_{\geq 0}.$
\end{itemize}

\

\begin{rem}
{\em  The Young diagram from Figure \ref{fig:yd} is bounded by an $\ 11 \ $ steps lattice path, however, it is bounded by an  $\ 8 \ $ steps directed path, namely
$$(0,0)\rightarrow (1,0)\rightarrow (1,1)\rightarrow (3,1)\rightarrow
(3,3)\rightarrow (5,3)\rightarrow (5,4)\rightarrow (6,4)\rightarrow (6,5).$$  
}
\end{rem}

\

\noindent Let $\ \pi_x \ $ and $\ \pi_y \ $ be the plane projections onto the $\ x$-axis and $\ y$-axis, respectively. We introduced Cartesian coordinates on $\ \mathrm{P}_n(x,y) \ $ by noticing that there is a bijective correspondence $$\mathrm{P}_n(x,y) \ \ \longleftrightarrow  \ \ 
\Delta^{n-1}_x \times \Delta^{n-1}_y \ \ \ \ \  \mbox{given by}$$
\begin{itemize}
  \item $ (p_0,...,p_{2n})  \in \mathrm{P}_n(x,y) \ $ is send to  $\ \ ((x_1,...,x_{n-1})\ ; \ (y_1,...,y_{n-1})) \in \Delta^{n-1}_x \times \Delta^{n-1}_y \ \ $ where
      $ \ \ x_{i}=\pi_x(p_{2i-1})  \ \  \mbox{and}  \ \ 
      y_{i}=\pi_y(p_{2i})\ \ $ for $\ \ 1\leq i \leq n-1.$
      
  \item $((x_1,...,x_{n-1})\ ; \ (y_1,...,y_{n-1})) \in \Delta^{n-1}_x \times \Delta^{n-1}_y \ $ is send to $ \ (p_0,...,p_{2n})  \in \mathrm{P}_n(x,y) \ $ where the points $\ p_i \ $ are given  for $\ 1 \leq i \leq n\ $ by
      $\ \   p_{2i-1}=(x_i,y_{i-1})  \ \  \mbox{and} \ \  p_{2i}=(x_i,y_i),\ \ $ where  $\ x_0=0, \ x_n=x,\  y_0=0, \  y_n=y. \ $
\end{itemize}

\

Thus we learn that  $\ \mathrm{P}_n(x,y) \ $ 
may be identified with a top dimensional convex polytope in 
$\ \mathbb{R}^{n-1}\times \mathbb{R}^{n-1}, \ $ and thus inherits differential structures; for example,  we get that
$$\mathrm{vol}(\mathrm{P}_n(x,y)) \ = \ \mathrm{vol}(\Delta^{n-1}_x \times \Delta^{n-1}_y)  \ = \ \mathrm{vol}(\Delta^{n-1}_x) \mathrm{vol}(\Delta^{n-1}_y)\ = \ \frac{x^{n-1}}{(n-1)!}\frac{y^{n-1}}{(n-1)!}.$$ 
Integrals over $\ \mathrm{P}_n(x,y) \ $ can be computed recursively as follows
$$\int_{\mathrm{P}_n(x,y)}f(x_1,...x_{n-1}\ ; \ y_1,...y_{n-1})dx_1\cdots d_{y_{n-1}} \ = $$
$$ \int_{0}^{x}\int_{0}^{y}\bigg( \int_{\mathrm{P}_{n-1}(x_{n-1},y_{n-1})}f(x_1,...x_{n-1}\ ; \ y_1,...y_{n-1})dx_1\cdots d_{y_{n-2}} \bigg)dx_{n-1}dy_{n-1}.$$

\

\section{Continuous diagrams}\label{s3}

For easy reading of this section is good to have  Figure \ref{fig:cyd}, Figure \ref{fig:dd}, and  Example \ref{2} in mind.
We are ready to define a continuous analogue for Young diagrams.
The diagram $\ D_p \ $ associated to a directed path $\ p \in \mathrm{P}_n(x,y)\ $ is  the planar region  enclosed by the interval from $\ (0,0)\ $ to $\ (0,y),\ $ the interval from $\ (0,y)\ $ to  $\ (x,y),\ $ and the directed path $ \ p, \ $ together with the tessellation of  $\ D_p \ $ by the rectangles $\ D_{p,i} \ $  with vertices $\ (0,y_{i-1}), \ (0,y_{i}), \ (x_i,y_{i-1}), \ (x_i,y_{i}) \ $ for
$\ 1 \leq i \leq n. \ $  In other words $\ (s,t) \in  D_{p,i} \ $ if and only if
$\ 0 \leq s \leq x_i \ $ and $\  y_{i-1} \leq t \leq y_{i-1}. $  
Therefore, $\ (s,t) \in \ D_{p} \ $ if and only if
$\  y_{i-1} \leq t \leq y_{i}\ $ and $\ 0 \leq s \leq x_i \ $ for some $\ 1 \leq i \leq n. \ $
The width and height of $\ D_p \ $ are given by 
 $\ w(D_p)=x \ $ and   $ \ h(D_p)=y,\ $ respectively.\\

\begin{figure}
  \begin{tikzpicture}
\draw  (0,0) -- (1.3,0)   (1.3,0) -- (1.3,0.5)
   (1.3,0.5) -- (1.5,0.5) (1.5,0.5) -- (1.5,0.7)
    (1.5,0.7)--(1.9,0.7)  (1.9,0.7)--(1.9,2.3)  
   (1.9,2.3) -- (3.1,2.3) (3.1,2.3) -- (3.1,2.5) 
   (0,0) -- (0,2.5) (0,2.5) -- (3.1,2.5)

   (4,0) -- (5.3,0)  (5.3,0) -- (5.3,0.5)
   (4,0.5) -- (5.3,0.5) (5.3,0.5)--(5.5,0.5) (5.5,0.5) -- (5.5,0.7)
   (4,0.7) -- (5.5,0.7) (5.5,0.7)--(5.9,0.7)
   (5.9,0.7)--(5.9,2.3)  (4,2.3) -- (5.9,2.3)
   (5.9,2.3) -- (7.1,2.3) (7.1,2.3) -- (7.1,2.5) (4,0) -- (4,2.5)
   (4,2.5) -- (7.1,2.5)
   
   (8,0) -- (9.3,0)   (9.3,0) -- (9.3,2.5)
    (9.3,0.5)--(9.5,0.5) (9.5,0.5) -- (9.5,2.5)
    (9.5,0.7)--(9.9,0.7)
   (9.9,0.7)--(9.9,2.5) 
   (9.9,2.3) -- (11.1,2.3) (11.1,2.3) -- (11.1,2.5) (8,0) -- (8,2.5)
   (8,2.5) -- (11.1,2.5)
   
   ;
\end{tikzpicture}

  \caption{  Underlying region of $\ D_p \ $ on the left,  diagram  $\ D_p \ $ in the middle,  vertical tessellation on the right. }
  \label{fig:cyd}
\end{figure}
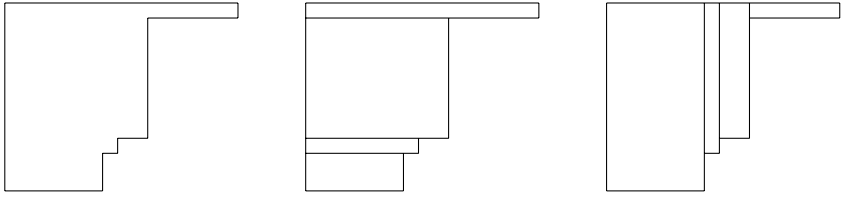

The area of the rectangle $\ D_{p,i}\ $ is given by 
$\ \ \mathrm{area}(D_{p,i})=x_i(y_{i}-y_{i-1}). \ \ $
Therefore the undelying area of $\ D_p \ $ is given by
$$\mathrm{area}(D_{p})\ = \ \sum_{i=1}^{n}x_i(y_i-y_{i-1}).$$
The sum above represents a continuous analogue for a decomposition of $\ \mathrm{area}(D_{p}). \ $ These sort of decompositions come with extra structure since each summand is itself a product as we are keeping track of the length $\ x_i \ $ and the height $\ y_i-y_{i-1}\ $ of the rectangles $\ D_{p,i},\ $ while in the combinatorial setting for numerical partitions only lengths are allowed to vary while heights are fixed to $\ 1. \ $ \\

And alternative tessellation for the underlying region of $\ D_{p}\ $ is obtained by considering the rectangles $\ D^v_{p,i}\ $ with vertices $\ (x_{i-1},y_{i-1}), 
\ (x_i,y_{i-1}), \  (x_{i-1},y), \ (x_i,y),\ $ for $ 1\leq i \leq n, \ $ thus $\ (s,t)\in D^v_{p,i} \ $ if and only if $\ x_{i-1} \leq s \leq x_i \ $ and $\  y_{i-1} \leq t \leq y.\ $ The vertical tessellation leads 
$$\mathrm{area}(D_{p})\ = \  \sum_{i=1}^{n}\mathrm{area}(D^v_{p,i}) \ = \ \sum_{i=1}^{n}(x_i-x_{i-1})(y-y_{i-1}).$$ 

\

Just as in the combinatorial setting, there is a stair diagram
$\ D_p^s \ $ for each diagram $\ D_p \ $ with $\ p \in \mathrm{P}_n(x,y).\ $ The diagram $\ D_p^s \ $ represents a stair built in a left justified fashion placing rectangular bricks on top of each other. The underlying region of $\ D_p^s\ $ is enclosed by the interval $\ (0,0)\ $ to $\ (0,y),\ $ the interval from $\ (0,0)\ $ to  $\ (x,0),\ $ and the east-south directed path $ \ q= (q_0,\cdots, q_{2n})\ $ where $\ \ q_i=(\pi_x(p_i),y-\pi_y(p_i)). \ \ $ The underlying region of $\ D_p^s \ $ comes with the tessellation by the rectangles with vertices $\ (0,\pi_y(q_{2i-1})),  \ (0,\pi_y(q_{2i})), \ q_{2i-1}, \ q_{2i} \ $ for $\ 1 \leq i \leq n. \ $ Clearly the underlying regions of $\ D_p \ $ and  $\ D_p^s \ $  have the same area.\\

For $\ p \in \mathrm{P}_n(x,y)\ $ the dual diagram $\ D_p^t=D_{p^t} \ $ of  $\ Y_p \ $ is the diagram associated to the directed east-north path $\ p^t \in \mathrm{P}_n(y,x) \ $ given in Cartesian coordinates by 
$$\ p^t=(x_1^t,...,x_{n-1}^t\ ; \ y_1^t,...,y_{n-1}^t) \in 
\Delta^{n-1}_y \times \Delta^{n-1}_x \ \ \ \ \ \mbox{where} \ \ \ \ \ 
x_i^t=y-y_{n-i} \ \ \ \mbox{and} \ \ \  y_i^t=x-x_{n-i}.$$
The underlying area of $\  D_p^t \ $ is equal to the underlying area of $\ D_{p}, \ $ indeed
$$\mathrm{area}(D_{p^t}) \ = \ \sum_{i=1}^{n}x_i^t(y_i^t-y_{i-1}^t) \ = \
\sum_{i=1}^{n}(y-y_{n-i})(x_{n-i+1}-x_{n-i})\ = \ \mathrm{area}(D_{p}),$$
where the latter identity follows from the vertical tessellation of 
$\ D_{p}.  $ \\

\begin{figure}
  \begin{tikzpicture}
  \draw  (0,0)--(3.1,0) (0,0)--(0,2.5) (0,0.2)--(3.1,0.2)
   (3.1,0)--(3.1,0.2) (0,1.8)--(1.9,1.8)
   (1.9,0.2)--(1.9,1.8) (1.5,1.8)--(1.5,2)
   (0,2)--(1.5,2) (0,2.5)--(1.3,2.5)(1.3,2)--(1.3,2.5)
  
   (5,0)--(5.2,0) (5,0)--(5,3.1) (5,1.2)--(6.6,1.2)
   (5.2,0)--(5.2,1.2) (5,1.6)--(6.8,1.6) (6.6,1.2)--(6.6,1.6)
   (5,1.8)--(7.5,1.8) (6.8,1.6)--(6.8,1.8) (5,3.1)--(7.5,3.1) 
   (7.5,1.8)--(7.5,3.1);
\end{tikzpicture}

  \caption{Stair diagram  $\ D_p^s \ $ on the left, and dual diagram $\ D_p^{t}\ $ on the right.}
  \label{fig:dd}
\end{figure}
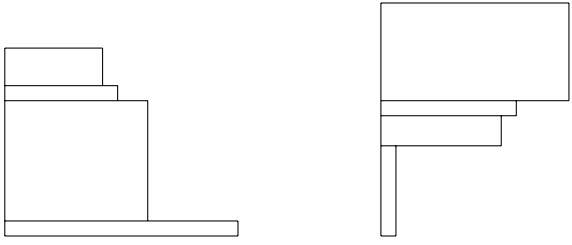

\begin{thm}{\em The dual map 
$\ \ (\ )^t:\mathrm{D}_n(x,y) \longrightarrow \mathrm{D}_n(y,x) \ \ $ is a volume preserving smooth involution. It preserves orientation if and only if $\ n \ $ is odd.
}
\end{thm}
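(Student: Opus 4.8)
The plan is to work entirely in the Cartesian coordinates on $\mathrm{D}_n(x,y)\cong \mathrm{P}_n(x,y)\cong \Delta^{n-1}_x\times\Delta^{n-1}_y\subseteq\mathbb{R}^{n-1}\times\mathbb{R}^{n-1}$ introduced above, where the dual map reads $x_i^t=y-y_{n-i}$ and $y_i^t=x-x_{n-i}$ for $1\le i\le n-1$. Since each output coordinate is an affine function of the inputs, the map is manifestly $C^\infty$ (indeed real-analytic), which settles smoothness at once, and it is the restriction of an affine self-map of $\mathbb{R}^{2(n-1)}$, so every differential-geometric claim reduces to linear algebra of its linear part. First I would record that the map really carries $\Delta^{n-1}_x\times\Delta^{n-1}_y$ into $\Delta^{n-1}_y\times\Delta^{n-1}_x$: the index reversal $i\mapsto n-i$ and the reflection $t\mapsto c-t$ are both order reversing, so their composite turns the increasing chain $0\le x_1\le\cdots\le x_{n-1}\le x$ into the increasing chain $0\le y_1^t\le\cdots\le y_{n-1}^t\le x$, and likewise for the other block; the endpoints $x_0=0,x_n=x,y_0=0,y_n=y$ guarantee the correct boundary behaviour.

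Next I would verify the involution property by applying the formula twice, interpreting the statement as saying that the composite of the dual maps $\mathrm{D}_n(x,y)\to\mathrm{D}_n(y,x)\to\mathrm{D}_n(x,y)$ is the identity. The only point requiring care is that the second application takes place in $\mathrm{P}_n(y,x)$, where the roles of width and height are interchanged. Writing $a_i=x_i^t=y-y_{n-i}$ and $b_i=y_i^t=x-x_{n-i}$ and feeding these into the dual formula with $x$ and $y$ swapped, the identity $n-(n-i)=i$ lets the two reversals and the two reflections cancel in pairs, returning $(x_i\,;\,y_i)$. Hence $(\ )^t\circ(\ )^t=\mathrm{id}$, which also confirms that the map is a bijection of polytopes.

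Finally, the volume and orientation statements both flow from the Jacobian. Because $x^t$ depends only on the $y$-variables and $y^t$ only on the $x$-variables, the Jacobian has the anti-block-diagonal form $\left(\begin{smallmatrix}0 & -R\\ -R & 0\end{smallmatrix}\right)$, where $R$ is the $(n-1)\times(n-1)$ reversal permutation matrix $R_{ij}=\delta_{j,n-i}$. I would then factor this linear part as $-SD$, with $S=\left(\begin{smallmatrix}0 & I\\ I & 0\end{smallmatrix}\right)$ the block swap and $D=\left(\begin{smallmatrix}R & 0\\ 0 & R\end{smallmatrix}\right)$. The global negation contributes $\det(-I)=(-1)^{2(n-1)}=1$, the two reversals contribute $\det D=(\det R)^2=1$, and the block swap, being a product of $n-1$ transpositions, contributes $\det S=(-1)^{n-1}$. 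Therefore the Jacobian determinant equals $(-1)^{n-1}$, so its absolute value is $1$ and the map preserves Euclidean volume, while it is positive exactly when $n-1$ is even, i.e. when $n$ is odd, giving the orientation claim.

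Nothing here is deep, but the sign bookkeeping is the crux and is where I expect the only friction. Individually $\det R=(-1)^{\lfloor(n-1)/2\rfloor}$ is ugly, and the temptation is to let that parity leak into the final answer; the point is that $R$ enters squared and therefore drops out entirely, so the parity dependence on $n$ comes \emph{solely} from the block swap $S$. Keeping the reversal sign and the swap sign conceptually separate is what makes the computation clean and makes the ``$n$ odd'' dichotomy transparent.
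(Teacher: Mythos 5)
Your proposal is correct and follows essentially the same route as the paper: both express the dual map in Cartesian coordinates as the explicit affine formula and then compute the sign of its linear part by separating the negation, index-reversal, and block-swap contributions. The paper carries out this sign bookkeeping with wedge products of the coordinate differentials, arriving at $(-1)^{(n-1)^2}$, while you use permutation-matrix determinants and arrive at $(-1)^{n-1}$; the two agree since $(n-1)^2 \equiv n-1 \pmod{2}$.
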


\begin{proof} Smoothness and involution properties follow from the fact that
the map $$( \ )^t:\Delta^{n-1}_x \times \Delta^{n-1}_y \longrightarrow \Delta^{n-1}_y \times \Delta^{n-1}_x\ \ \ \ \ \mbox{is given by} $$
$$ (x_1,...,x_{n-1}\ ; \ y_1,...,y_{n-1})^t \ = \ (y-y_{n-1},...,y-y_{1}\ ; \ x-x_{n-1},...,x-x_{1}).$$
The volume and orientation properties follow from the identities
$$dx_1^t\wedge...\wedge dx_{n-1}^t \wedge dy_1^t\wedge...\wedge dy_{n-1}^t \ = $$  
$$(-1)^{2(n-1)}dy_{n-1}\wedge...\wedge dy_{1} \wedge dx_{n-1}\wedge...\wedge dx_{1} \ =$$
$$(-1)^{\frac{(n-2)(n-1)}{2}+\frac{(n-2)(n-1)}{2}}dy_{1}\wedge...\wedge dy_{n-1} \wedge dx_{1}\wedge...\wedge dx_{n-1} \ = $$
$$(-1)^{(n-1)^2}dx_{1}\wedge...\wedge dx_{n-1} \wedge dy_{1}\wedge...\wedge dy_{n-1}.$$
\end{proof}

\begin{exmp}\label{2}
{\em Let $\ D_p \in \mathrm{D}_4(3.1,2.5) \ $ be the diagram with east-north directed path $\ p\ $ given by
$$ (0,0) \rightarrow (1.3,0)  \rightarrow (1.3,0.5) \rightarrow (1.5,0.5)
\rightarrow (1.5,0.7)\rightarrow$$ 
$$ (1.9,0.7)\rightarrow (1.9,2.3)
\rightarrow (3.1,2.3) \rightarrow (3.1,2.5).$$
In Cartesian coordinates $\ \ D_p= (1.3,1.5,1.9\ ; \ 0.5,0.7,2.3) \in \Delta_{3.1}^3\times \Delta_{2.5}^3. \ \ $ 
The area of $\ D_p\ $ is $\ 4.61 .$\\

\noindent Figure \ref{fig:cyd} \ displays  the underlying region of $\ D_p \ $
on the left,  diagram $\ D_p\ $ in the middle, and the vertical tessellation for $\ D_p \ $ on the right.  Figure \ref{fig:dd}  \ displays  the stair representation $\ D_p^s \ $ on the left, and  the dual diagram $\ D_p^{t}\ $ 
of the diagram $\ D_p \ $ on the right.\\

\noindent The stair diagram of $\ D_p^s \ $ is given by the east-south directed path
$$(0,2.5) \rightarrow (1.3,2.5) \rightarrow (1.3,2) \rightarrow (1.5,2) \rightarrow (1.5,1.8) \rightarrow$$  
$$  (1.9,1.8) \rightarrow(1.9,0.2)\rightarrow
(3.1,0.2) \rightarrow (3.1,0).$$
The dual diagram $\ D_p^{t} \ $ is determined by the east-north directed path
$$(0,0) \rightarrow (0.2,0) \rightarrow (0.2,1.2) \rightarrow (1.8,1.2) \rightarrow (1.8,1.6) \rightarrow$$  
$$  (2,1.6) \rightarrow(2,1.8)\rightarrow
(2.5,1.8) \rightarrow (2.5,3.1).$$
with Cartesian coordinates $\ \  (0.2,1.8, 2 \ ; \ 1.2,1.6,1.8  ) \in \Delta_{2.5}^3\times \Delta_{3.1}^3. \ $

}
\end{exmp}

\

Let $\ \mathrm{P}\ = \ \bigsqcup_{ x\geq 0, y \geq 0}\mathrm{P}(x,y)\ = \ \bigsqcup_{x\geq 0, y \geq 0} 
\bigg( \bigsqcup_{n\geq 0}\mathrm{P}_n(x,y) \bigg)\ = \
\bigsqcup_{n\geq 0}  
\bigg( \bigsqcup_{x\geq 0, y \geq 0} \mathrm{P}_n(x,y) \bigg) \ $
be the set of all east-north paths starting at $\ (0,0). \ $ Note that
we are formally introducing a path $\ \diamond \ $ of length zero from $\ (0,0)\ $ to $\ (0,0),\ $ and its corresponding diagram, and that there are no other paths of length zero. Set
$$\mathrm{D}_n(x,y) = \{ D_p \ | \ p \in \mathrm{P}_n(x,y) \} ,\ \ 
 \ \mathrm{D}(x,y) = \{ D_p \ | \ p \in \mathrm{P}(x,y) \} ,\ \ \ 
 \mathrm{D} = \{ D_p \ | \ p \in \mathrm{P} \} .$$
We introduce differential and geometric structures on 
$\ \mathrm{D}_n(x,y) \subseteq  \mathrm{D}(x,y)\subseteq \mathrm{D}, $ from the corresponding structures on $\  \mathrm{P}_n(x,y) \subseteq  \mathrm{P}(x,y)\subseteq \mathrm{P}. $ \\

\begin{prop}
{\em  The following properties hold. 
\begin{description}
  \item[a)] $\ \mathrm{D} \ $ adquires a $\ \mathbb{N}$-graded smooth convex monoid structure with unit $\ \diamond \ $ and concatenation product 
$ \  D_p \ast D_q \ = \ D_{p \ast q} , \  $ where
$$(p_0, \cdots, p_n)\ast (q_0, \cdots, q_m) \ = \
  (p_0, \cdots, p_n, p_n+q_1, \cdots, p_n+q_m).$$
  \item[b)] The map $\ \mathrm{area}: \mathrm{D} \longrightarrow \mathbb{R}_{\geq 0}\  $ satisfies
$$ \mathrm{area}(D_p \ast D_q ) \ = \  \mathrm{area}(D_p) \ + \ \mathrm{area}(D_q) \ + \ xz, $$ for $\ \ D_p \in \mathrm{D}_n(x,y), \ \ D_q \in \mathrm{D}_m(w,z)  .$
  \item[c)] Concatenation $ \ \ast: \mathrm{D}_n(x,y) \times \mathrm{D}_m(w,z) \longrightarrow 
\mathrm{D}_{n+m}(x+w,y+z) \ $ is given by
 $$(x_1,...,x_{n-1}\ ; \ y_1,...,y_{n-1})\ast (w_1,...,w_{m-1}\ ; \ z_1,...,z_{m-1}) \ = $$
 $$(x_1,...,x_{n-1},x,x+w_1,...,x+w_{m-1} \ ; \
 y_1,...,y_{n-1},y,y+z_1,...,y+z_{m-1}) .$$
 \item[d)] Concatenation $ \ \ast: \mathrm{D}_n(x,y) \times \mathrm{D}_m(w,z) \longrightarrow 
\mathrm{D}_{n+m}(x+w,y+z) \ $ is a convex map:
 $$\big(tD_p+(1-t)D_{p'}\big)\ast \big(tD_q+(1-t)D_{q'}\big) \ = \ tD_{p\ast q } \ + \ 
 (1-t)D_{p'\ast q'}\ \ \ \ \mbox{for} \ \ \ t\in [0,1].$$
\end{description}
}
\end{prop}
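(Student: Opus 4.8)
The unifying device is to carry out every computation in the Cartesian coordinates attached to paths, using the identification $\mathrm{P}_n(x,y) \leftrightarrow \Delta^{n-1}_x \times \Delta^{n-1}_y$ together with the reconstruction rules $x_i = \pi_x(p_{2i-1})$, $y_i = \pi_y(p_{2i})$ and the conventions $x_0 = y_0 = 0$, $x_n = x$, $y_n = y$. I would prove part c) first, because the explicit coordinate formula it produces is the engine that drives a), b) and d).

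For c), take $p = (p_0,\dots,p_{2n}) \in \mathrm{P}_n(x,y)$ and $q = (q_0,\dots,q_{2m}) \in \mathrm{P}_m(w,z)$ and read off the vertices of the concatenated sequence $p\ast q = (p_0,\dots,p_{2n}, p_{2n}+q_1,\dots,p_{2n}+q_{2m})$, which has $2(n+m)+1$ entries and so lies in $\mathrm{P}_{n+m}$. Applying $X_I = \pi_x(P_{2I-1})$ to this merged sequence splits into three ranges: for $I \leq n-1$ one recovers $x_I$; the seam corner satisfies $X_n = \pi_x(p_{2n-1}) = x$; and for $I = n+j$ with $1 \leq j \leq m-1$, using $p_{2n} = (x,y)$, one gets $\pi_x(p_{2n}+q_{2j-1}) = x + w_j$. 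The $Y$-coordinates behave symmetrically, producing exactly the asserted formula and confirming that $p\ast q$ terminates at $(x+w, y+z)$, i.e. the grading $\mathrm{D}_n \times \mathrm{D}_m \to \mathrm{D}_{n+m}$.

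Granting c), the remaining parts follow quickly. For a): since $p$ ends with a north step and $q$ starts with an east step, alternation is preserved across the seam, so $p \ast q$ is again an admissible east-north path; associativity and the two-sided unit law for $\diamond$ are inherited from concatenation of finite sequences under the shift by the terminal point (shifts add, and $\diamond$ contributes no tail); and smoothness holds because the formula in c) is affine, each output coordinate being a single input coordinate or a sum of two. This same affineness gives d) in one line: writing every coordinate of $p\ast q$ as $L(p)+M(q)$ with $L,M$ coordinate projections (one possibly zero) and using linearity yields $(tp+(1-t)p')\ast(tq+(1-t)q') = t(p\ast q)+(1-t)(p'\ast q')$ coordinatewise. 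For b) I would expand $\mathrm{area}(D_{p\ast q}) = \sum_{I=1}^{n+m} X_I(Y_I - Y_{I-1})$ from the horizontal tessellation and split at $I=n$: the first $n$ terms reproduce $\mathrm{area}(D_p)$, while the terms $I = n+j$ give $\sum_{j=1}^m (x+w_j)(z_j - z_{j-1}) = x(z_m - z_0) + \mathrm{area}(D_q) = xz + \mathrm{area}(D_q)$, using $z_0 = 0$ and $z_m = z$.

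The main obstacle is not any single computation but the index bookkeeping at the seam: the terminal corner of $p$ becomes an interior corner of $p \ast q$, so one must track which vertices of the merged sequence carry corner data and check that the shift by $p_{2n}=(x,y)$ interacts correctly with $\pi_x$ and $\pi_y$. A secondary point deserving care is to make the ambient convex structure precise — regarding $\bigsqcup_{x,y}\mathrm{P}_n(x,y)$ as a single convex cone in $\mathbb{R}^n \times \mathbb{R}^n$ with the endpoints playing the role of the top coordinates $x_n, y_n$ — so that the combinations $tD_p + (1-t)D_{p'}$ appearing in d) are genuinely well-defined.
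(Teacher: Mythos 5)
Your proof is correct and follows essentially the same route as the paper: the product is concatenation after translating the second path to the endpoint of the first, item c) is the coordinate expression of this, and d) is deduced from the affineness of c), exactly as the paper does. The only cosmetic difference is in b), where you split the tessellation sum $\sum_{I} X_I(Y_I-Y_{I-1})$ at the seam $I=n$ algebraically, while the paper subdivides the region geometrically into the region of $D_p$, a translate of the region of $D_q$, and the rectangle of area $xz$; the three terms of your computation are precisely these three pieces.
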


\begin{proof}
The unit axiom, associativity, and item c) follow since
the product $\ \ast \ $ of east-north directed paths is given concatenation of paths after translating the second path so that it begins where the first path ends. For item b) note that the underlying region of 
$ \ D_{p \ast q} \ $ can be subdivided in tree pieces, the first one is the underlying region of $ \ D_{p}, \ $ the second one is the underlying region of $ \ D_{q} \ $ translated by $\ (x,y), \ $  and the third one is the rectangle of area $\ xz \ $ with vertices 
$\ (0,y),  \ (x,y),  \ (x,y+z),  \ (0,y+z). \ $ Item d) follows from item c).
\end{proof}

\

Next we introduce a partial order on the space of continuous diagrams
$\  \mathrm{D}. \  $ Intuitively $\ D_p \leq D_q \ $ if the underlying region of
$\ D_p, \ $ translated upwards so that its upper left corner  
matches the upper left corner of $ \ D_q, \ $ fits inside the underlying region of $ \ D_q. \ $  Path $\ \diamond \ $ is the minimum of $\  \mathrm{D}. \  $

\

\begin{prop}\label{k}
{\em  The following properties hold.
\begin{description} 
  \item[a)] $ \mathrm{D} \  $ admits a partial order where  for $\ D_p \in \mathrm{D}_n(x,y)\ $ and $ \ D_q \in \mathrm{D}_m(w,z) \ $ we set 
  $$\ D_p \leq D_q \ \ \ \ \ \mbox{ if and only if } 
  \ \ \ \ \ y \leq z \ \ \ \ \mbox{and} \ \ \ \ D_p + (0,z-y) \ \subseteq \  D_q. \ $$
  Note that $ \ (s,t) \in D_p + (0,z-y) \ $ if and only if
 there is  $\ 1 \leq i \leq n \ $ such that 
  $$\  y_{i-1}+z-y \leq t \leq y_{i}+z-y\ \ \ \mbox{and} \ \ \ 
    0 \leq s \leq x_i.$$
    
  \item[b)] $D_p \leq D_q \ $ if and only if $\ (x_i,y_{i-1}+z-y) \in D_q  \ $
  for $1 \leq i \leq n, \ $ i.e.  $\ D_p \leq D_q \ $ if and only if
  for each $\ \ 1 \leq i \leq n \ $ there is $ \ 1 \leq j \leq m \  $ such that
  $\ \  z_{j-1} \leq y_{i-1}+z-y \leq z_j\ \ \ \mbox{and}
    \ \ \ 0 \leq x_i \leq w_j.$ 

  \item[c)] $ D_p \leq D_q \ $ implies that 
  $\ \ \displaystyle \sum_{i=1}^{n}x_i(y_i-y_{i-1}) \ = \ \mathrm{area}(D_p)\ \leq \ \mathrm{area}(D_q) \ = \ \sum_{i=1}^{m}w_i(z_i-z_{i-1}).$ 

\end{description}
}
\end{prop}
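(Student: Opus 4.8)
The plan is to establish the three items in order, treating the parenthetical formula in a) as an immediate substitution, the order axioms as the real content of a), the reduction to finitely many corner points as the content of b), and monotonicity of area as the content of c). Throughout I would use the characterization recorded in Section~\ref{s3} that $(s,t)\in D_p$ if and only if there is $1\le i\le n$ with $y_{i-1}\le t\le y_i$ and $0\le s\le x_i$.

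For the parenthetical claim in a), since $(s,t)\in D_p+(0,z-y)$ means $(s,\,t-(z-y))\in D_p$, substituting the characterization above gives exactly $y_{i-1}+z-y\le t\le y_i+z-y$ and $0\le s\le x_i$ for some $i$, as asserted. To prove that $\le$ is a partial order I would check the three axioms. Reflexivity is immediate from $D_p+(0,0)=D_p$. For transitivity, if $D_p\le D_q\le D_r$ with heights $y\le z\le v$, I would compose translations, writing $D_p+(0,v-y)=\bigl(D_p+(0,z-y)\bigr)+(0,v-z)\subseteq D_q+(0,v-z)\subseteq D_r$, using that translation preserves set inclusion, whence $D_p\le D_r$. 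Antisymmetry follows because $D_p\le D_q$ and $D_q\le D_p$ force $y=z$ and mutual inclusion of the (now untranslated) regions, hence equality of the underlying regions; identifying a diagram with its underlying region then gives $D_p=D_q$. I would flag here that genuine antisymmetry requires either identifying diagrams with their regions or restricting to reduced paths, since a zero-length vertical step produces a degenerate strip that leaves the region unchanged.

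The heart of the argument is b), where I would reduce the set-containment $D_p+(0,z-y)\subseteq D_q$ to the finite check that each bottom-right corner $(x_i,\,y_{i-1}+z-y)$ lies in $D_q$. The key structural fact is that $D_q$ \emph{widens upward}: its width function $t\mapsto\max\{s:(s,t)\in D_q\}$ is non-decreasing because $w_1\le\cdots\le w_m$. Granting this, the binding constraint on the $i$-th translated strip $[0,x_i]\times[\,y_{i-1}+z-y,\ y_i+z-y\,]$ occurs at its lowest height $y_{i-1}+z-y$, so containment of the whole strip is equivalent to containment of that one corner. The ``only if'' direction is trivial since the corner lies in the strip. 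For ``if'', given $(x_i,y_{i-1}+z-y)\in D_q$ I would pick $j$ with $z_{j-1}\le y_{i-1}+z-y\le z_j$ and $x_i\le w_j$; then any $(s,t)$ in the translated $i$-th strip satisfies $s\le x_i\le w_j$ together with $z_{j-1}\le t\le z$ (the upper bound from $t\le y_i+z-y\le y_n+z-y=z$), and monotonicity of the width function places $(s,t)$ in $D_q$. The restatement in terms of indices $j$ is then just the definition of membership in $D_q$ unwound. I expect this monotonicity-to-corner reduction to be the main obstacle, largely in handling heights that land exactly on a partition value $z_j$, a case I would absorb into the non-decreasing width function.

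Finally, c) is monotonicity of area and should be routine once a) and b) are in hand. Since translation is area preserving and $D_p+(0,z-y)\subseteq D_q$, Lebesgue measure gives $\mathrm{area}(D_p)=\mathrm{area}\bigl(D_p+(0,z-y)\bigr)\le\mathrm{area}(D_q)$; the two explicit sums are then precisely the area formulas $\mathrm{area}(D_p)=\sum_i x_i(y_i-y_{i-1})$ and $\mathrm{area}(D_q)=\sum_i w_i(z_i-z_{i-1})$ recorded in Section~\ref{s3}.
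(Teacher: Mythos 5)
Your proposal is correct and follows essentially the same route as the paper's proof: the order axioms are verified by composing vertical translations, item b) is obtained by reducing the inclusion $D_p+(0,z-y)\subseteq D_q$ to the bottom-right corner points $(x_i,\,y_{i-1}+z-y)$ of the translated strips, and item c) follows from translation-invariance of area. You additionally supply two justifications the paper's terse proof leaves implicit: the monotone, upward-widening width function of $D_q$ (coming from $w_1\le\cdots\le w_m$) that makes the corner reduction in b) rigorous, and the caveat that antisymmetry requires identifying a diagram with its underlying region (or forbidding zero-length steps), since distinct paths can bound the same region.
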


\begin{proof} 
 a) Reflexivity holds as $ \ y \leq y \ $ and $\ D_p \subseteq D_p. \ $ 
 Anti-symmetry holds as we would have $ \ y \leq z \leq y \ $ and 
 $\ D_p \subseteq D_q \subseteq D_p. \ $ For transitivity note that
 if $ \ y\leq z\leq w, \ $   $ \ D_p + (0,z-y)\subseteq D_q, \ $ and
 $\ D_q + (0,w-z) \subseteq D_r, \ $ then
 $ \ \ D_p + (0,w-y)\ = \ \big(D_p + (0,z-y)\big)+(w-z)\ \subseteq \ D_q +(w-z) \ \subseteq \ D_r.\ \ $
  The last part of item a) follows from the characterization of the points in the underlying region of a diagram given in the first paragraph of section \ref{s3}, taking upwards translations into account.  To check that
 $\ D_p + (0,z-y) \ \subseteq \  D_q \ $ it is enough to check that the vertical translation by $\ (0,z-y) \ $ of the points $ \ p_{2i-1} \ $ of the path $\ p\ $ are in   $\  D_q, \ $ thus item b) holds. Item c) follows from the invariance of area under translations.
\end{proof}

\

\begin{prop}
{\em  Consider the posets $\ (\mathrm{D}_n(x,y), \leq) \ $ with the partial order $\ \leq \ $ coming from  Proposition \ref{k}. For $\ r \in \mathrm{D}_l(w,x), \
 \ q \in \mathrm{D}_m(y,z) \ $ the maps
$$(\ )\ast q: \mathrm{D}_n(x,y) \rightarrow \mathrm{D}_{n+m}(x,z)
\ \ \ \ \mbox{and} \ \ \ \ r\ast (\ ): \mathrm{D}_n(x,y) \rightarrow \mathrm{D}_{l+n}(w,y)$$
are order preserving.
}
\end{prop}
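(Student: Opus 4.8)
The plan is to unwind the definition of the order in Proposition \ref{k} and verify the required containment of underlying regions directly, using the three–piece description of a concatenation. Fix the diagram $q$ (resp. $r$) and take $D_p \le D_{p'}$; writing $y = h(D_p)$ and $y' = h(D_{p'})$ we have $y \le y'$, and with $\delta = y' - y \ge 0$ the hypothesis reads $D_p + (0,\delta) \subseteq D_{p'}$. In either case the concatenation adds a fixed height to each diagram — namely $h(q)$ on the right, while on the left one prepends a fixed copy of $r$ of height $h(r)$ — so the two outputs again have heights differing by exactly $\delta$. Hence, to conclude $D_p\ast q \le D_{p'}\ast q$ (resp. $r\ast D_p \le r\ast D_{p'}$) it suffices to translate the first output up by $\delta$ and check that it is contained in the second.

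The tool for this is the subdivision of the underlying region of a concatenation $D_a \ast D_b$ into the copy of $D_a$, the translate $D_b + (w(D_a), h(D_a))$, and the filler rectangle $[0, w(D_a)] \times [h(D_a), h(D_a)+h(D_b)]$, established in the proof of the area identity $\mathrm{area}(D_p\ast D_q)=\mathrm{area}(D_p)+\mathrm{area}(D_q)+xz$. Applying this to both outputs and translating the first up by $\delta$, I would compare the three pieces one at a time.

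For right concatenation the $D_p$–piece is sent into the $D_{p'}$–piece exactly by the hypothesis $D_p + (0,\delta)\subseteq D_{p'}$, while the translate of $q$ and the filler rectangle match up once one knows the width inequality $w(D_p) \le w(D_{p'})$. That inequality is the one extra ingredient, and it follows from Proposition \ref{k} b) with $i = n$: the widest block of $D_p$ is its top block, and after the alignment it must sit under some block of $D_{p'}$, forcing $w(D_p) \le w(D_{p'})$. With this in hand, each horizontal slice of the shifted $q$–piece and of the rectangle is visibly contained in the corresponding slice of $D_{p'}\ast q$.

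For left concatenation the middle and top pieces are immediate: $\big(D_p + (0,\delta)\big) + (w(r),h(r)) \subseteq D_{p'} + (w(r),h(r))$ by hypothesis, and the filler rectangle only shrinks under the upward shift. The step I expect to be the main obstacle is the bottom piece: shifting $D_r$ up by $\delta$ pushes part of it above its native height band $[0,h(r)]$, into the region that in $r\ast D_{p'}$ belongs to the second factor rather than to $D_r$. To handle this I would use that the horizontal cross–sections of $D_r$ are nested and all contained in $[0,w(r)]$, since its blocks grow in width from bottom to top, so the overshooting part of $D_r + (0,\delta)$ has width at most $w(r)$ and is therefore absorbed by the filler rectangle $[0,w(r)]\times[h(r),h(r)+y']$ present in $r\ast D_{p'}$. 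Combining the three containments yields the claim, and the symmetric bookkeeping shows that no width inequality is needed on this side.
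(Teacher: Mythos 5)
Your proof is correct, but it takes a genuinely different and more general route than the paper's. The paper exploits the exact setting of the statement: both diagrams being compared lie in the same poset $\ \mathrm{D}_n(x,y), \ $ so they share the same height $\ y \ $ (and width $\ x$), the translation in Proposition \ref{k} is by $\ (0,0), \ $ and the order collapses to plain containment $\ D_p \subseteq D_{p'}, \ $ equivalently to the path $\ p \ $ lying on the left side of the path $\ p'. \ $ Since both paths end at $\ (x,y) \ $ (respectively start at $\ (0,0)$), concatenating with a fixed $\ q \ $ (or $\ r$) appends literally the same translated piece to both, and order preservation is immediate --- a one-line proof. You instead prove monotonicity for the order on all of $\ \mathrm{D}, \ $ where the two diagrams may have different heights and widths, so the shift $\ \delta = y'-y \ $ is genuinely present; this forces real work: the three-piece decomposition of a concatenation, the width inequality $\ w(D_p) \leq w(D_{p'}) \ $ extracted from Proposition \ref{k}-b) with $\ i=n \ $ (needed for right concatenation), and the monotonicity of horizontal cross-sections (blocks widen from bottom to top) for left concatenation. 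All of these steps are sound; the only point worth spelling out is that the nestedness of cross-sections does double duty on the left side --- it shows both that the overshooting part of $\ D_r+(0,\delta) \ $ has width at most $\ w(r) \ $ (so the filler rectangle absorbs it) and that the part remaining in the band $\ [0,h(r)] \ $ stays inside $\ D_r. \ $ The trade-off: the paper's restriction buys brevity, while your argument yields a strictly stronger conclusion, namely that the concatenation maps are monotone with respect to the partial order on the whole monoid $\ \mathrm{D}, \ $ not merely on each slice $\ \mathrm{D}_n(x,y).$
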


\begin{proof}
The partial order on $ \ \mathrm{D}_n(x,y) \ $  no longer  uses translations, so $\ D_p  \leq   D_q \ $ if and only if 
$\ D_p \subseteq   D_q, \ $ thus $\ D_p  \leq   D_q \ $ if and only if the path $\ p \ $ is on the left side of the path $\ q. \ $ Using either of these characterization, it is clear that the statement of the proposition holds.
\end{proof}

\

\section{Volume of the space of diagrams}

Next we use the continuous diagrams introduced in the last section to build continuous analogues for some quantities from the theory of Young diagrams. A diagram in $ \  \mathrm{YD}(m,n) \ $ has height (and thus number of blocks)  $\ n \ $ and  width  $\ m. \ $ 
On the continuous side height and number of blocks need not be equal, so we are led to consider the spaces $\ \mathrm{D}_n(x,y) \ $ with fixed width $\ x, \ $ height $\ y,\ $ and number of blocks $\ n. \ $ We have that
$$ \mathrm{vol}(\mathrm{D}_n(x,y))\ =  \ 
\frac{x^{n-1}}{(n-1)!}\frac{y^{n-1}}{(n-1)!}.$$
Note that  
$ \ \ \mathrm{vol}(\mathrm{D}_{n+1}(x,y)) < \mathrm{vol}(\mathrm{D}_{n}(x,y))\ \ $  for $\ \ n > \sqrt{xy}. \ $\\

Below we compute the volume of a disjoint union of manifolds as the sum of the volume of each piece, even if the pieces have different dimensions. 
Our continuous analogue for the number $ \ |\mathrm{YD}(m,n)\big| \ $ of Young diagrams with width $\ m \ $ and height $\ n \ $ is given by
$$ \ \mathrm{vol}(\mathrm{D}(x,y))\ =  \ \sum_{n=1}^{\infty}\mathrm{vol}(\mathrm{D}_n(x,y)).$$
Note that height is fixed but the number of blocks is actually allowed to vary
from $\ 1 \ $ to infinity. \\

Recall that the modified Bessel functions $\ I_{\nu} \ $ are given 
for $\ \nu \in \mathbb{N} \ $ by
$$\displaystyle I_{\nu}\left(z\right)\ = \ (\tfrac{z}{2})^{\nu}\sum_{k=0}^{\infty}\frac{(\frac{z}
{2})^{2k}}{k!\left(\nu+k\right)!}.$$
Next couple of results follow from the definitions of 
$\ \mathrm{vol}(\mathrm{D}(x,y)) \ $ and $\ I_{\nu}\left(z\right).$\\

\begin{thm}\label{t2}
{\em For $\ x>0, \ y> 0 \ $ the following properties hold.
\begin{description}

   \item[a)] $\displaystyle \mathrm{vol}(\mathrm{D}(x,y))\ = \  \sum_{n=0}^{\infty}\frac{x^ny^n}{n!n!}.$
  \item[b)] $\mathrm{vol}(\mathrm{D}(x,y))\ = \ \mathrm{vol}(\mathrm{D}(x,y)).\ $
  \item[c)]$ \mathrm{vol}(\mathrm{D}(ax,a^{-1}y)) \ = \  \mathrm{vol}(\mathrm{D}(x,y)) \ $ for all $\ a\in \mathbb{R}_{>0}.$
  \item[d)] $\displaystyle \bigg(x\frac{\partial}{\partial x}-y\frac{\partial}{\partial y}\bigg) \mathrm{vol}(\mathrm{D}(x,y))\ = \ 0. $
  \item[e)] $\displaystyle  \bigg( \frac{\partial}{\partial x}\frac{\partial}{\partial y}\ - \ 1\bigg) \mathrm{vol}(\mathrm{D}(x,y))\ = \ 0.$
  \item[f)] $ \mathrm{vol}(\mathrm{D}(x,y)) \ = \  I_0(2\sqrt{xy}) \ \ \ \ \ \ 
  \mbox{and}\ \ \ \ \ \ I_0(z)\ = \ \mathrm{vol}(\mathrm{D}(\frac{z}{2},\frac{z}{2})). $
\end{description}
}
\end{thm}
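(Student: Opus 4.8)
The plan is to reduce the entire theorem to part \textbf{a)}, which is the only item requiring contact with the geometry; everything else is then a formal consequence of the resulting power series. First I would prove \textbf{a)} by inserting the volume formula $\mathrm{vol}(\mathrm{D}_n(x,y)) = \frac{x^{n-1}}{(n-1)!}\frac{y^{n-1}}{(n-1)!}$ (established immediately before the theorem) into the definition $\mathrm{vol}(\mathrm{D}(x,y)) = \sum_{n=1}^{\infty}\mathrm{vol}(\mathrm{D}_n(x,y))$, and then shifting the summation index by $n \mapsto n+1$:
$$\mathrm{vol}(\mathrm{D}(x,y)) \ = \ \sum_{n=1}^{\infty}\frac{x^{n-1}y^{n-1}}{((n-1)!)^2} \ = \ \sum_{n=0}^{\infty}\frac{x^n y^n}{n!\,n!}.$$
Writing $F(x,y)$ for this series, I note that its coefficients $1/(n!)^2$ decay faster than any geometric rate, so $F$ is entire in each variable and converges absolutely and uniformly on compact subsets of $\mathbb{R}^2$; hence it may be differentiated and reindexed term by term without further comment.

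With $F$ in hand, the remaining items are manipulations of this series. For \textbf{b)} --- read as the symmetry $\mathrm{vol}(\mathrm{D}(x,y)) = \mathrm{vol}(\mathrm{D}(y,x))$, since as literally typeset it is a tautology --- I would simply observe that $F$ is symmetric in $x$ and $y$; alternatively this is exactly the content of the volume-preserving duality involution $(\ )^t : \mathrm{D}_n(x,y) \to \mathrm{D}_n(y,x)$ proved earlier, summed over $n$. For \textbf{c)} I would substitute $x \mapsto ax$ and $y \mapsto a^{-1}y$ into $F$ and note that the factor $a^n a^{-n} = 1$ cancels in every term, leaving $F$ unchanged. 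Part \textbf{d)} is the infinitesimal form of \textbf{c)}: either differentiate $\mathrm{vol}(\mathrm{D}(ax,a^{-1}y))$ in $a$ at $a = 1$, or observe directly that the Euler-type operator $x\partial_x - y\partial_y$ sends the term $x^n y^n/(n!)^2$ to $(n-n)\,x^n y^n/(n!)^2 = 0$.

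For \textbf{e)} I would compute $\partial_x \partial_y F$ term by term: applying $\partial_y$ and then $\partial_x$ to $x^n y^n/(n!)^2$ produces $x^{n-1}y^{n-1}/((n-1)!)^2$ for $n \geq 1$, and reindexing by $n \mapsto n+1$ recovers $F$ exactly, so $(\partial_x\partial_y - 1)F = 0$. Finally, for \textbf{f)} I would match $F$ against the stated series $I_0(z) = \sum_{k\geq 0}(z/2)^{2k}/(k!)^2$: setting $z = 2\sqrt{xy}$ gives $(z/2)^{2k} = (xy)^k$, so $I_0(2\sqrt{xy}) = F(x,y)$, and specializing $x = y = z/2$, so that $2\sqrt{xy} = z$ for $z \geq 0$, yields the converse identity $I_0(z) = \mathrm{vol}(\mathrm{D}(z/2,z/2))$.

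There is essentially no genuine obstacle here; the single point deserving care is the justification for interchanging the infinite sum with differentiation in \textbf{d)} and \textbf{e)} and for the index shifts throughout, but this is immediate from the fact that $F$ is entire, as noted above. The only real subtlety is therefore presentational: I would order the argument so that \textbf{a)} does all the geometric work and the closed-form series is then exploited uniformly for the five remaining parts.
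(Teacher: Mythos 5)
Your proposal is correct and is essentially the paper's own argument: the paper gives no detailed proof, stating only that the theorem ``follows from the definitions of $\mathrm{vol}(\mathrm{D}(x,y))$ and $I_{\nu}(z)$,'' and your write-up simply carries out that computation --- deriving the series in a) from $\mathrm{vol}(\mathrm{D}_n(x,y))=\frac{x^{n-1}}{(n-1)!}\frac{y^{n-1}}{(n-1)!}$ and treating b)--f) as formal consequences. Your reading of b) as the symmetry $\mathrm{vol}(\mathrm{D}(x,y))=\mathrm{vol}(\mathrm{D}(y,x))$ (a typo in the statement) also matches the paper's intent, as confirmed by the remark following the theorem that the function extends to a symmetric entire analytic function.
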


\

From Theorem \ref{t2}  we see that $ \ \mathrm{vol}(\mathrm{D}(x,y)) \ $ can be extended to a symmetric entire analytic function, it satisfies an hyperbolic second-order linear partial differential equation with constant coefficients, as well as a first-order linear partial differential equation.\\

\begin{thm}\label{t3}
{\em  For $\ x>0, \ y> 0, \ z>0 \ $ the following properties hold.
\begin{description}

  \item[a)] $  \displaystyle \int_{\Delta^k_x}\mathrm{vol}(\mathrm{D}(x,y))\ dx \ = \   \frac{\partial^k}{\partial y^k}\mathrm{vol}(\mathrm{D}(x,y)) \ = \
      \sum_{n=0}^{\infty}\frac{x^{n+k}y^{n}}{(n+k)!n!} .$
      
  \item[b)] $  \displaystyle
      \int_{\Delta^k_y} \mathrm{vol}(\mathrm{D}(x,y))\ dy\  = \ \frac{\partial^k}{\partial x^k}\mathrm{vol}(\mathrm{D}(x,y)) \ = \ \sum_{n=0}^{\infty}\frac{x^ny^{n+k}}{n!(n+k)!}. $

\item[c)] $ \displaystyle \frac{\partial^k}{\partial x^k}\mathrm{vol}(\mathrm{D}(x,y)) \ = \ \bigg(\frac{y}{x}\bigg)^{\frac{k}{2}} I_k(2\sqrt{xy}). $
    
    \item[d)] $ \displaystyle 
     \frac{\partial^k}{\partial y^k}\mathrm{vol}(\mathrm{D}(x,y)) \ = \ \bigg(\frac{x}{y}\bigg)^{\frac{k}{2}} I_k(2\sqrt{xy}). $

\item[e)]$\displaystyle I_k(z) \  = \ \frac{\partial^k\rho}{\partial x^k}\bigg(\frac{z}{2},\frac{z}{2}\bigg) \ = \ \frac{\partial^k\rho}{\partial y^k}\bigg(\frac{z}{2},\frac{z}{2}\bigg) , \ \ $ where $\ \ \rho(x,y)  =  \mathrm{vol}(\mathrm{D}(x,y)).\ $ 
\end{description}
}
\end{thm}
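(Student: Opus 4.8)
The plan is to reduce every claim to a term-by-term manipulation of the power series $\rho(x,y) = \sum_{n=0}^{\infty}\frac{x^n y^n}{n!n!}$ furnished by Theorem \ref{t2}a). Since this series is entire in each variable, as recorded in the discussion following Theorem \ref{t2}, I may freely differentiate under the summation sign and interchange summation with the iterated integral $\int_{\Delta^k_x}$; this legitimacy is the only analytic input, and I address it at the end.

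I would begin with part a). Regarding $\rho$ as a power series in its first argument, $\rho = \sum_{n=0}^{\infty}\frac{y^n}{n!n!}x^n$, the $k$-fold iterated antiderivative $\int_{\Delta^k_x}(\,\cdot\,)\,dx$ sends $x^n$ to $\frac{n!}{(n+k)!}x^{n+k}$; this is the single-variable instance of the antiderivative formula recorded in the Example of Section 3, and is equally a case of Lemma \ref{l1}. Carrying this out term by term yields $\sum_{n=0}^{\infty}\frac{x^{n+k}y^n}{(n+k)!\,n!}$, the right-hand series. For the middle equality I differentiate $\rho$ in $y$ a total of $k$ times: the term $\frac{x^n y^n}{n!n!}$ survives only for $n\geq k$ and produces $\frac{x^n y^{n-k}}{n!(n-k)!}$, and reindexing by $m=n-k$ gives precisely the same series. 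Conceptually this coincidence is explained by Theorem \ref{t2}e): $\partial_x\partial_y\rho=\rho$ shows $\partial_y\rho$ is an $x$-antiderivative of $\rho$, while the boundary values $\partial_y^{\,j}\rho|_{x=0}=0$ for $j\geq 1$ (read off from $\rho(0,y)=1$) single out the correct one, so that $\partial_y^{\,k}\rho = \int_{\Delta^k_x}\rho\,dx$.

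Part b) is the image of part a) under the interchange $x\leftrightarrow y$, using the manifest symmetry of $\rho$ visible in the series of Theorem \ref{t2}a) (cf. Theorem \ref{t2}b)). For part c) I expand the right-hand side from the definition of $I_k$: with $z=2\sqrt{xy}$ one has $\tfrac{z}{2}=\sqrt{xy}$, hence $(\tfrac{y}{x})^{k/2}I_k(2\sqrt{xy}) = (\tfrac{y}{x})^{k/2}(xy)^{k/2}\sum_{j\geq 0}\frac{(xy)^j}{j!(k+j)!} = y^k\sum_{j\geq 0}\frac{x^j y^j}{j!(k+j)!} = \sum_{j\geq 0}\frac{x^j y^{j+k}}{j!(k+j)!}$, which is exactly the series computed in part b); part d) follows identically from part a). Finally part e) is obtained by setting $x=y=\tfrac{z}{2}$ in parts c) and d): the prefactors $(\tfrac{y}{x})^{k/2}$ and $(\tfrac{x}{y})^{k/2}$ both collapse to $1$ and $2\sqrt{xy}$ becomes $z$, leaving $I_k(z)$ in each case.

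The only genuine obstacle is analytic bookkeeping rather than any new idea: one must justify interchanging the infinite sum with $\partial_y^{\,k}$, with $\partial_x^{\,k}$, and with the iterated integral over $\Delta^k_x$, along with the reindexing shifts $n\mapsto n\pm k$. This is handled by the locally uniform convergence of $\rho$ and of all its termwise derivatives on compact subsets of $\{x>0,\,y>0\}$, i.e. by the entire analyticity already established in Theorem \ref{t2}; once that is invoked, every equality above is a finite rearrangement of absolutely convergent series.
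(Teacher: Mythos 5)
Your proposal is correct and takes essentially the same route as the paper: the paper gives no written proof of Theorem \ref{t3}, stating only that it (together with Theorem \ref{t2}) ``follows from the definitions of $\mathrm{vol}(\mathrm{D}(x,y))$ and $I_{\nu}(z)$,'' and your term-by-term power-series computation --- applying the $k$-fold antiderivative formula of Lemma \ref{l1} to $\sum_n \frac{x^n y^n}{n!n!}$, reindexing the $k$-fold derivatives, and expanding $I_k(2\sqrt{xy})$ from its defining series --- is exactly that intended computation, spelled out with the convergence justifications made explicit.
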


\

From Theorem \ref{t3}-a),b),c),d) \ we see that $ \ \mathrm{vol}(\mathrm{D}(x,y)), \  $ its derivatives, and its antiderivatives can be written in terms of the modified Bessel functions $\ I_{\nu} \ $ for $\ \nu \in \mathbb{N} ,\ $ conversely, from Theorem \ref{t3}-e) \ we see that the modified Bessel function
$\ I_{\nu}, \ $ for $\ \nu \in \mathbb{N}, \ $  can be written in terms of $ \ \mathrm{vol}(\mathrm{D}(x,y)) \ $ and its derivatives.\\

Let $\ \mathrm{D}^{1}(x,y) \ $ be the set of continuous diagrams
with width less than or equal to  $\ x \ $ and height $\ y ; \ $
let $\ \mathrm{D}^{2}(x,y) \ $ be the set of continuous diagrams
with width $\ x \ $ and height less than or equal to $\ y ; \ $
let $\ \mathrm{D}^{3}(x,y) \ $ be the set of continuous diagrams
with width less than or equal to  $\ x, \ $ and height less thatn or 
equal to $\ y . \ $\\

The east-north path associated to a diagram in $\ \mathrm{D}^{1}(x,y) \ $
can be extended in a unique way, by adding a last east step, so that the path actually ends up at $ \ (x,y) .\ $ From this, and related observations,  we obtain that  
$$\ \mathrm{D}^{i}(x,y) = \coprod_{n=1}^{\infty}\mathrm{D}_n^{i}(x,y)
\ =  \ \coprod_{n=1}^{\infty}\{ D_p  \ | \ p \in \mathrm{P}_n^{i}(x,y) \}, 
\ \ \ \ \mbox{where}$$

\begin{itemize}
  \item A directed path $\ p \ $ is in  $ \  \mathrm{P}_n^{1}(x,y)\ $ if it has length $\ 2n+1, \ $ begins at $\ (0,0), \ $ ends at $ \ (x,y), \ $ moves in alternating fashion east or north, starts and ends with an east step. Thus $\ \mathrm{P}_n^{1}(x,y) \ \simeq \ \Delta^{n}_x \times \Delta^{n-1}_y.$
  \item A directed path $\ p \ $ is in  $ \ \mathrm{P}_n^{2}(x,y) \ $ if it has length $\ 2n+1, \ $ begins at $\ (0,0), \ $ ends at $ \ (x,y), \ $ moves in alternating fashion east or north, starts and ends with a north step.
      Thus $\ \mathrm{P}_n^{2}(x,y) \ \simeq \ \Delta^{n-1}_x \times \Delta^{n}_y.$
  \item A directed path $\ p \ $ is in  $ \ \mathrm{P}_n^{3}(x,y) \ $ if it has length $\ 2n+2, \ $ begins at $\ (0,0), \ $ ends at $ \ (x,y), \ $ moves in alternating fashion east or north, starts with a north step and ends with an east step. Thus $\ \mathrm{P}_n^{3}(x,y) \ \simeq \ \Delta^{n}_x \times \Delta^{n}_y.$
\end{itemize}

\

\begin{thm}\label{yc}
{\em   With the preceding notation we have for $\ x>0, \ y> 0 \ $ that:
  \begin{description}
    \item[a)] $ \displaystyle \mathrm{vol}(\mathrm{D}^{1}(x,y)) \ = \ 
    \int_{0}^{x}\mathrm{vol}(\mathrm{D}(s,y))ds \ = \ \sqrt{\frac{x}{y} } \ I_1(2\sqrt{xy}).$
    \item[b)] $\displaystyle \mathrm{vol}(\mathrm{D}^{2}(x,y)) \ = \ 
    \int_{0}^{y}\mathrm{vol}(\mathrm{D}(x,t))dt \ = \ \sqrt{\frac{y}{x} } \ I_1(2\sqrt{xy}).$
    \item[c)] $\displaystyle \mathrm{vol}(\mathrm{D}^{3}(x,y)) \ = \ 
    \int_{0}^{x}\int_{0}^{y}\mathrm{vol}(\mathrm{D}(s,t))dtds \ = \ \mathrm{vol}(\mathrm{D}(x,y)) -1.$
  \end{description}
}
\end{thm}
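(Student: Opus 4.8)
The plan is to compute each of the three volumes as an infinite sum of volumes of the simplex products identified in the three bullet points immediately preceding the theorem, and to recognize the resulting power series as the claimed Bessel and volume expressions. This is the natural strategy because the decomposition $\ \mathrm{D}^{i}(x,y) = \coprod_{n=1}^{\infty}\mathrm{D}_n^{i}(x,y)\ $ is already in place, and each stratum $\ \mathrm{D}_n^{i}(x,y)\ $ has been identified with an explicit product of simplices whose volume factors as a product of $\ \mathrm{vol}(\Delta^k_x)=x^k/k!\ $ terms.

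For part a), the identification $\ \mathrm{P}_n^{1}(x,y)\simeq \Delta^{n}_x\times\Delta^{n-1}_y\ $ gives
$$\mathrm{vol}(\mathrm{D}^{1}(x,y)) \ = \ \sum_{n=1}^{\infty}\mathrm{vol}(\Delta^n_x)\,\mathrm{vol}(\Delta^{n-1}_y) \ = \ \sum_{n=1}^{\infty}\frac{x^n}{n!}\frac{y^{n-1}}{(n-1)!}.$$
Reindexing with $\ m=n-1\ $ turns this into $\ \sum_{m=0}^{\infty}x^{m+1}y^{m}/((m+1)!\,m!)\ $, which by Theorem \ref{t3}-a) with $\ k=1\ $ equals $\ \int_{0}^{x}\mathrm{vol}(\mathrm{D}(s,y))\,ds\ $ and, by Theorem \ref{t3}-c) with $\ k=1\ $, equals $\ \sqrt{x/y}\,I_1(2\sqrt{xy})\ $. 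Part b) is identical after swapping the roles of $\ x\ $ and $\ y\ $ using $\ \mathrm{P}_n^{2}(x,y)\simeq \Delta^{n-1}_x\times\Delta^{n}_y\ $ and invoking Theorem \ref{t3}-b),d).

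For part c), the identification $\ \mathrm{P}_n^{3}(x,y)\simeq \Delta^{n}_x\times\Delta^{n}_y\ $ yields
$$\mathrm{vol}(\mathrm{D}^{3}(x,y)) \ = \ \sum_{n=1}^{\infty}\frac{x^n}{n!}\frac{y^n}{n!}.$$
Comparing with Theorem \ref{t2}-a), which states $\ \mathrm{vol}(\mathrm{D}(x,y))=\sum_{n=0}^{\infty}x^ny^n/(n!)^2\ $, the sum here is missing only the $\ n=0\ $ term, which is $\ 1\ $, so the series equals $\ \mathrm{vol}(\mathrm{D}(x,y))-1\ $. The integral formula $\ \int_0^x\int_0^y\mathrm{vol}(\mathrm{D}(s,t))\,dt\,ds\ $ follows by integrating the series of Theorem \ref{t2}-a) term by term in both variables, which reproduces the same shifted series.

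The only genuine subtlety, rather than a true obstacle, is the justification of the term-by-term interchange of the infinite sum with the integral in the middle equalities of each part, and the validity of summing volumes of strata of differing dimensions; both are addressed by the absolute convergence of the entire series $\ \sum x^ny^n/(n!)^2\ $ (Theorem \ref{t2} shows $\ \mathrm{vol}(\mathrm{D}(x,y))=I_0(2\sqrt{xy})\ $ is entire), together with the convention, stated just before Theorem \ref{t2}, that the volume of a disjoint union of manifolds of possibly different dimensions is taken to be the sum of the volumes of the pieces. Given that convention and convergence, the computation is entirely routine.
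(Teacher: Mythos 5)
Your proof is correct and follows essentially the same route as the paper: decompose $\mathrm{D}^{i}(x,y)$ into the strata $\mathrm{D}_n^{i}(x,y)$, identified with products of simplices, sum their volumes using the convention on disjoint unions of different dimensions, and recognize the resulting series via Theorems \ref{t2} and \ref{t3}. One small slip in your citations: in part a) the series $\sum_{m\geq 0}x^{m+1}y^{m}/((m+1)!\,m!)$ is $\partial_y\,\mathrm{vol}(\mathrm{D}(x,y))$, so the Bessel identification $\sqrt{x/y}\,I_1(2\sqrt{xy})$ comes from Theorem \ref{t3}-d) (and in part b) from Theorem \ref{t3}-c)), not the other way around --- the formulas you state are nevertheless the correct ones, and this matches the paper's own use of Theorem \ref{t3}-a),d).
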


\begin{proof} Other items being similar, we just proof item a). In the last equation below we use Theorem \ref{t3}-a),d), we have 
$$\mathrm{vol}(\mathrm{D}^{1}(x,y)) \ = \  \mathrm{vol}(\mathrm{P}^{1}(x,y))
\ = \  \mathrm{vol}(\coprod_{n=1}^{\infty}\mathrm{P}_n^{1}(x,y)) \ = $$
$$\sum_{n=1}^{\infty} \mathrm{vol}(\mathrm{P}_n^{1}(x,y))  \ = \ 
\sum_{n=1}^{\infty} \mathrm{vol}(\Delta_x^{n}\times \Delta_y^{n-1}) \ = 
\sum_{n=1}^{\infty} \mathrm{vol}(\Delta_x^{n})\mathrm{vol}(\Delta_y^{n-1}) \ =$$
$$\sum_{n=0}^{\infty} \frac{x^{n+1}}{(n+1)!}\frac{y^{n}}{n!} \ = \ \int_{0}^{x}\mathrm{vol}(\mathrm{D}(s,y))ds \ = \ \sqrt{\frac{x}{y} } \ I_1(2\sqrt{xy}).$$
\end{proof}

\

Next we collet some additional properties of the function
$\ \mathrm{vol}(\mathrm{D}(x,y)). $\\

\begin{thm}
{\em For $\ x>0, \ y> 0 \ $ the following properties hold.
\begin{description}
  \item[a)] For $\ xy \rightarrow \infty, \  k \geq 0, \ $ there are constants $\ c_k \in \mathbb{Q} \ $ such that  we have the asymptotic behaviour 
   $$ \frac{\partial^k}{\partial x^k} \mathrm{vol}(\mathrm{D}(x,y)) \  \sim \ 
   \frac{c_k}{2\sqrt{\pi}}
   x^{-\frac{k}{2}-\frac{1}{4}}y^{\frac{k}{2}-\frac{1}{4}}e^{2\sqrt{xy}},
   $$
    $$\frac{\partial^k}{\partial y^k} \mathrm{vol}(\mathrm{D}(x,y)) \  \sim \ 
   \frac{c_k}{2\sqrt{\pi}}
   x^{\frac{k}{2}-\frac{1}{4}}y^{-\frac{k}{2}-\frac{1}{4}}e^{2\sqrt{xy}}.$$
  \item[b)] The following integral representations hold
  $$ \displaystyle \frac{\partial^k}{\partial x^k}\mathrm{vol}(\mathrm{D}(x,y))  \ = \  \frac{1}{\pi}\bigg(\frac{y}{x}\bigg)^{\frac{k}{2}}
  \int_{0}^{\pi}e^{2\sqrt{xy}\mathrm{cos}(\theta)}\mathrm{cos}(k\theta)d\theta,$$
  $$\frac{\partial^k}{\partial y^k}\mathrm{vol}(\mathrm{D}(x,y))  \ = \  \frac{1}{\pi}\bigg(\frac{x}{y}\bigg)^{\frac{k}{2}}
  \int_{0}^{\pi}e^{2\sqrt{xy}\mathrm{cos}(\theta)}\mathrm{cos}(k\theta)d\theta. $$
  \item[c)] For $ \ k \geq 1 \ $ the following formula holds
  $$\mathrm{vol}(\mathrm{D}(x,y))^k \ = \ 
   \sum_{n=0}^{\infty}\bigg( \sum_{n_1+\cdots + n_k=n} \binom{n}{n_1,...,n_k}^2 \ \bigg)\frac{x^ny^n}{n!n!}, \ \ \ \ \ \mbox{in particular}$$
  $$\mathrm{vol}(\mathrm{D}(x,y))^2 \ = \ 
   \sum_{n=0}^{\infty}\bigg( \sum_{k=0}^n \binom{n}{k}^2 \ \bigg)\frac{x^ny^n}{n!n!} \ = \ \sum_{n=0}^{\infty}\binom{2n}{n}\frac{x^ny^n}{n!n!}.$$
  \item[d)] The following formula holds
  $$\frac{1}{\mathrm{vol}(\mathrm{D}(x,y))}\ = \ 
   1\ + \ \sum_{n=1}^{\infty}\bigg(
   \sum_{\substack{n_1+\cdots + n_k=n \\ k\geq 1, \ n_i\geq 1}}
    (-1)^k\binom{n}{n_1,...,n_k}^2 \ \bigg)\frac{x^ny^n}{n!n!}.$$
\end{description}

}
\end{thm}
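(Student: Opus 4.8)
The plan is to treat the four parts independently, all building on the explicit power series $\mathrm{vol}(\mathrm{D}(x,y))=\sum_{n\geq 0}\frac{x^ny^n}{n!n!}$ of Theorem \ref{t2}-a) together with the Bessel identities of Theorem \ref{t3}. Parts a) and b) I would derive from the classical analytic theory of $I_{\nu}$, while parts c) and d) are purely algebraic manipulations of the series.

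For b), I would start from Theorem \ref{t3}-c),d), which express $\frac{\partial^k}{\partial x^k}\mathrm{vol}$ and $\frac{\partial^k}{\partial y^k}\mathrm{vol}$ as $(y/x)^{k/2}I_k(2\sqrt{xy})$ and $(x/y)^{k/2}I_k(2\sqrt{xy})$, and then substitute the standard integral representation $I_k(z)=\frac{1}{\pi}\int_0^{\pi}e^{z\cos\theta}\cos(k\theta)\,d\theta$ with $z=2\sqrt{xy}$. Part a) follows the same route but with the large-argument asymptotic $I_{\nu}(z)\sim e^{z}/\sqrt{2\pi z}$ in place of the integral representation: setting $z=2\sqrt{xy}$ gives $I_k(2\sqrt{xy})\sim e^{2\sqrt{xy}}/\big(2\sqrt{\pi}\,(xy)^{1/4}\big)$, and multiplying by $(y/x)^{k/2}$ (resp. $(x/y)^{k/2}$) collects the powers of $x$ and $y$ into exactly the stated prefactors $x^{-k/2-1/4}y^{k/2-1/4}$ (resp. $x^{k/2-1/4}y^{-k/2-1/4}$); the leading constant is $c_k=1$, which is manifestly rational, and the higher-order terms of the Bessel asymptotic series supply the remaining rational $c_k$ should a full expansion be wanted.

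For c), I would expand the $k$-th power of the series $\sum_m \frac{x^m y^m}{m!m!}$ directly: the coefficient of $x^n y^n$ in the product is $\sum_{n_1+\cdots+n_k=n}\prod_j \frac{1}{(n_j!)^2}$, and rewriting $\frac{1}{\prod_j (n_j!)^2}=\frac{1}{(n!)^2}\binom{n}{n_1,\dots,n_k}^2$ via the multinomial coefficient yields the claimed formula. The special case $k=2$ then reduces to the Vandermonde identity $\sum_{j=0}^{n}\binom{n}{j}^2=\binom{2n}{n}$. For d), I would invert $f:=\mathrm{vol}(\mathrm{D}(x,y))$ through the geometric series $1/f=\sum_{k\geq 0}(-1)^k(f-1)^k$, expand each $(f-1)^k$ as $\sum_{n\geq k}\big(\sum_{n_1+\cdots+n_k=n,\ n_i\geq 1}\prod_j\frac{1}{(n_j!)^2}\big)(xy)^n$, interchange the $n$- and $k$-summations, and apply the same multinomial rewriting as in c).

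The routine parts are b), c), and d), which amount to bookkeeping once the series form and the two classical Bessel facts are in hand; the interchanges of summation in c) and d) are justified by absolute convergence, since the series in c) represents an entire function and in d) the reciprocal is analytic near the origin because $f(0,0)=1\neq 0$, giving a positive radius of convergence. I expect the main obstacle to be part a): one must invoke the full large-argument asymptotic of $I_{\nu}$ and verify rationality of its coefficients, and — if the asymptotic is intended uniformly in the regime $xy\to\infty$ with the ratio $y/x$ left free — one must check that the one-variable asymptotic in $z=2\sqrt{xy}$ transfers correctly after multiplication by the algebraic prefactors $(y/x)^{\pm k/2}$.
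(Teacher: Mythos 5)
Your proposal is correct and follows essentially the same route as the paper, whose proof simply cites known Bessel-function facts for a) and b) and standard power-series techniques for c) and d); you have filled in exactly those steps, using Theorem \ref{t3}-c),d) as the bridge to $I_k$, the classical integral representation and large-argument asymptotics of $I_k$, and the multinomial/geometric-series expansions. The only point worth noting is that your uniformity worry in a) dissolves immediately: since the prefactors $(y/x)^{\pm k/2}$ are exact (not asymptotic), the one-variable asymptotic in $z=2\sqrt{xy}$ transfers verbatim.
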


\begin{proof}
Items a) and b) follow from known properties  of the Bessel functions
\cite{n}. Items c) and d) follow from standard techniques for power series. 
\end{proof}

\

A continuous analogue for the binomial coefficients $\ \displaystyle { x \brace s } \ $ was built in  \cite{cd, dc} using the techniques summarized in the introduction, and has been further studied in \cite{lrvw, rod, vw}. Next result 
writes these coefficients in terms of $\ \mathrm{vol}(\mathrm{D}(x,y)). \ $
Conversely, $\ \mathrm{vol}(\mathrm{D}(x,y)) \ $ can be obtain from the continuous  binomial coefficients via a differential equation.\\

\begin{thm}
{\em Set $\ \rho(x,y)=\mathrm{vol}(\mathrm{D}(x,y)). \ $ For $\ 0 \leq s \leq x \ $ the continuous binomial coefficients
are given by
$${ x \brace s } \ = \ 2\rho(s,x-s) \ + \
 \frac{x}{x-s}\frac{\partial \rho}{\partial x }(s,x-s) .$$   

}
\end{thm}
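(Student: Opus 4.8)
The plan is to read the right-hand side as the total volume of the space of \emph{all} east--north directed paths from $\ (0,0)\ $ to $\ (s,x-s),\ $ which is the continuous incarnation of the elementary fact that $\ \binom{x}{s}\ $ counts lattice paths from $\ (0,0)\ $ to $\ (s,x-s);\ $ this is precisely the quantity that the continuous binomial coefficient of \cite{cd, dc} is built to measure. So the first step is to recall that $\ { x \brace s }\ $ equals the sum of the volumes of the four families of directed paths from $\ (0,0)\ $ to $\ (s,x-s)\ $ sorted by whether they begin and end with an east or a north step. Writing $\ u=s,\ v=x-s\ $ and using Theorem \ref{t2}-a) together with the polytope descriptions $\ \mathrm{P}_n^{i}(u,v)\ $ introduced just before Theorem \ref{yc}, the ``begin east / end north'' and ``begin north / end east'' families \emph{each} contribute $\ \rho(u,v)=\sum_{n\ge 0}\frac{u^nv^n}{n!\,n!}\ $ (each counted from its single corner path onward), while the ``begin east / end east'' and ``begin north / end north'' families contribute $\ \mathrm{vol}(\mathrm{D}^1(u,v))\ $ and $\ \mathrm{vol}(\mathrm{D}^2(u,v)),\ $ respectively. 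Hence $\ { x \brace s }=2\rho(s,x-s)+\mathrm{vol}(\mathrm{D}^1(s,x-s))+\mathrm{vol}(\mathrm{D}^2(s,x-s)),\ $ and comparison with the asserted formula reduces the theorem to the single clean equality
$$\frac{x}{x-s}\,\frac{\partial\rho}{\partial x}(s,x-s)\ =\ \mathrm{vol}(\mathrm{D}^1(s,x-s))+\mathrm{vol}(\mathrm{D}^2(s,x-s)).$$

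Second, I would establish this equality by the substitution trick that splits the coefficient $\ \frac{x}{x-s}.\ $ Evaluating the first-slot derivative of the series $\ \rho(x,y)=\sum_{n\ge 0}\frac{x^ny^n}{n!n!}\ $ at $\ (s,x-s),\ $ or equivalently invoking Theorem \ref{t3}-c), gives $\ \frac{\partial\rho}{\partial x}(s,x-s)=\sqrt{\tfrac{x-s}{s}}\,I_1\big(2\sqrt{s(x-s)}\big).\ $ Now write $\ \frac{x}{x-s}=\frac{s}{x-s}+1,\ $ so that
$$\frac{x}{x-s}\,\frac{\partial\rho}{\partial x}(s,x-s)\ =\ \sqrt{\tfrac{s}{x-s}}\,I_1\big(2\sqrt{s(x-s)}\big)\ +\ \sqrt{\tfrac{x-s}{s}}\,I_1\big(2\sqrt{s(x-s)}\big),$$
and the two summands are exactly $\ \mathrm{vol}(\mathrm{D}^1(s,x-s))\ $ and $\ \mathrm{vol}(\mathrm{D}^2(s,x-s))\ $ by Theorem \ref{yc}-a),b) (with $\ x\mapsto s,\ y\mapsto x-s$). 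This closes the chain. A quick consistency check: under $\ s\mapsto x-s\ $ the two volumes swap while $\ \rho\ $ is fixed, so the whole formula is invariant, matching $\ \binom{x}{s}=\binom{x}{x-s}.\ $ One could alternatively avoid Bessel functions and verify the displayed reduction as a termwise identity of power series in $\ s\ $ and $\ x-s,\ $ using Theorem \ref{t2}-a) and the two series for $\ \mathrm{vol}(\mathrm{D}^1),\ \mathrm{vol}(\mathrm{D}^2)\ $ appearing in the proof of Theorem \ref{yc}.

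The main obstacle is not the computation but pinning down the normalization: I must make sure that the ``begin east / end north'' and ``begin north / end east'' families each contribute the \emph{full} $\ \rho(s,x-s)\ $ rather than $\ \rho(s,x-s)-1.\ $ The delicate point is that the family $\ \mathrm{D}^3(x,y)\ $ is indexed from $\ n=1,\ $ so that $\ \mathrm{vol}(\mathrm{D}^3)=\rho-1,\ $ whereas the path-counting interpretation of the binomial coefficient must also include the degenerate length-two corner path (the single ``all north then all east'' staircase), which restores the missing $\ 1\ $ and produces a genuine second copy of $\ \rho.\ $ Reconciling this bookkeeping with the precise definition of $\ { x \brace s }\ $ in \cite{cd, dc} — that is, confirming that their normalization indeed counts both corner paths — is the one step that genuinely requires care, and it is exactly what accounts for the coefficient $\ 2\ $ in front of $\ \rho\ $ and for the absence of any additive constant in the final formula.
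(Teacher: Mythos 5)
Your proposal is correct and is essentially the paper's own proof with the citation unpacked: the paper's argument is literally ``use Theorem \ref{t3} and Definition 6, Identity 7 of \cite{cd}'', and your four-family decomposition of directed paths from $(0,0)$ to $(s,x-s)$ — EN and NE contributing $\rho(s,x-s)$ each, EE and NN contributing $\mathrm{vol}(\mathrm{D}^1(s,x-s))$ and $\mathrm{vol}(\mathrm{D}^2(s,x-s))$ — together with the Bessel identifications from Theorems \ref{t3}-c) and \ref{yc}-a),b) is exactly the content of that definition and identity, and your algebra (splitting $\tfrac{x}{x-s}=\tfrac{s}{x-s}+1$ to get $\sqrt{\tfrac{s}{x-s}}\,I_1+\sqrt{\tfrac{x-s}{s}}\,I_1$) checks out. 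The normalization you flag is resolved the way you guess: the coefficient of \cite{cd} is the total volume with \emph{both} degenerate corner paths counted as points of volume $1$, which is why the NE family contributes the full $\rho$ rather than $\rho-1$ and no additive constant survives.
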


\begin{proof}
Use Theorem \ref{t3} above,  and Definition 6, Identity 7  from \cite{cd}.
\end{proof}

\

\begin{cor}
{\em  For $\ x>0, \ y> 0 \ $ the function $ \ \rho(x,y)=\mathrm{vol}(\mathrm{D}(x,y)) \ $ satisfies the first order non-homogeneous linear differential equation 
$$(x+y)\frac{\partial \rho}{\partial x } \ + \ 
2y\rho \ - \ y { x+y \brace x } \ = \  0.$$   

}
\end{cor}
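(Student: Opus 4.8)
The plan is to obtain the stated differential equation as a direct reformulation of the preceding theorem, via the substitution of the total argument by $x+y$ and the part by $x$. Recall that the theorem expresses the continuous binomial coefficient as
$${ N \brace s } \ = \ 2\rho(s, N-s) \ + \ \frac{N}{N-s}\frac{\partial \rho}{\partial x}(s, N-s), \qquad 0 \le s \le N,$$
where I have renamed the total to $N$ to keep it notationally distinct from the independent variables $x,y$ of the corollary. This renaming also disambiguates the original statement, in which the symbol $x$ plays two roles: once as the total in ${x \brace s}$ and in the factor $\tfrac{x}{x-s}$, and once as the name of the first slot of $\rho$ in $\tfrac{\partial \rho}{\partial x}$.

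First I would verify that the point $(N,s)=(x+y,x)$ lies in the admissible range. Since $x>0$ and $y>0$ we have $0<x<x+y$, so $0\le s\le N$ holds and, crucially, $N-s=y>0$; hence the factor $N/(N-s)$ is well defined and equals $(x+y)/y$. Under this substitution $\rho(s,N-s)=\rho(x,y)$ and $\tfrac{\partial\rho}{\partial x}(s,N-s)=\tfrac{\partial\rho}{\partial x}(x,y)$, where the derivative is always taken in the first slot of $\rho$, in agreement with the corollary's convention.

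Carrying out the substitution yields
$${ x+y \brace x } \ = \ 2\rho(x,y) \ + \ \frac{x+y}{y}\frac{\partial \rho}{\partial x}(x,y).$$
The final step is purely algebraic: multiply both sides by $y$ to clear the denominator, giving $y\,{ x+y \brace x } = 2y\,\rho(x,y) + (x+y)\tfrac{\partial\rho}{\partial x}(x,y)$, and transpose to reach the asserted identity $(x+y)\tfrac{\partial \rho}{\partial x} + 2y\rho - y\,{ x+y \brace x } = 0$.

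I do not anticipate any genuine obstacle: the corollary is a bookkeeping consequence of the theorem. The only points requiring care are tracking that the partial derivative acts on the first argument of $\rho$ and confirming the domain condition $N-s=y>0$, which is what legitimizes clearing the denominator in the last step.
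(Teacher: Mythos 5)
Your proposal is correct and matches the paper's intended argument: the paper states this corollary without a separate proof, precisely because it follows from the preceding theorem by the substitution $(N,s)=(x+y,x)$, multiplication by $y$, and rearrangement, exactly as you carried out. Your explicit attention to the two roles of the symbol $x$ in the theorem and to the positivity of $N-s=y$ is a sound clarification of what the paper leaves implicit.
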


\

\section{$ q\ $ and $ \ ln(q)\ $ generating functions}

The cardinality of finite sets can be generalized for finite sets with weights in a commutative monoid $ \ w:x\rightarrow A\ $ as 
$\ \ |(x,w)|_A=\sum_{i\in x}w(i). \ \ $ An important case is
the $\ q$-cardinality for $\ \mathbb{N} $-graded finite sets $\ w:x\rightarrow \mathbb{N} \ $ which is given by 
$$\ |(x,w)|_q\ = \ \sum_{i\in x}q^{w(i)}\ = \ \sum_{n=0}^{\infty}|w^{-1}(n)|q^n \  \in \ \mathbb{N}[q].\ $$
Choosing an element $\ q\in A, \ $ then $\ |(x,w)|_q \ $ may be regarded as taking values in $ \ A. \ $ The usual choice is to let $\ 0< q<1 \ $ be a real number, but other choices are also interesting. 
For a subset $\ x \subseteq \mathbb{N} \ $ the grading $ \ w \ $ is just the inclusion map, for example for $\ n\geq 1 \ $ we have that
$$\ \displaystyle [n]_q\ := \ |\{0,...,n-1\}|_q\ = \ 1+\cdots +q^{n-1} \ =\ \frac{1-q^n}{1-q}. \ $$ 
As a second example, consider the group of permutations $\ S_n \ $ as a 
finite set with $\ \mathbb{N} $-grading $\ \mathrm{inv}:S_n \rightarrow \mathbb{N}\ $ sending a permutation $\ \alpha \ $ to the number 
$\ \mathrm{inv}(\alpha) \ $ of inversion of $\ \alpha. \ $ The $\ q$-cardinality of $\ (S_n, \mathrm{inv})\ $ is the $\ q$-analogue
$\ [n]_q!\ $ of the factorial number $\ n!, \ $ namely
$$|(S_n, \mathrm{inv})|_q \ = \  \sum_{\alpha \in S_n } q^{ \mathrm{inv}(\alpha)}  \ = \ \prod_{k=1}^{n}[k]_q \ = \ [n]_q!.$$
Note that $\ q$-cardinality may often be applied to infinite sets, for example, it can always be computed for subsets of $\ \mathbb{N}. \ $ As an example we have that
$$\displaystyle \frac{|l\mathbb{N}|_q}{|\mathbb{N}|_q}\ = \ \frac{1+\cdots+q^{ln}+\cdots }{1+\cdots+q^{n}+\cdots} \ = \ 
\frac{\frac{1}{1-q^l} }{\frac{1}{1-q}}\ = \ \frac{1\ }{[l]_q}.$$
The substitution $\ q=e^{-z} \ $ transforms polynomial identities in $\ q \ $
into identities involving power series in $\ z, \ $ for example for
$\ \displaystyle 1+\cdots +q^{n-1}=\frac{1-q^n}{1-q\ }\ $ we get
$$1\ +\ \sum_{k=0}^{\infty}\bigg(1^n +\cdots+(n-1)^k\bigg)\frac{(-z)^k}{k!}\ = \ \frac{1-e^{-zn}}{1-e^{-z}},$$ i.e. we obtain an identity for the generating series for the power sum numbers in the variable $\ -z. \ $ For infinite sums the substitution $\ q=e^{-z} \ $ may be problematic, for example setting $\ q=e^{-z} \ $  in  
$ \ 1+\cdots+q^{n}+\cdots\ $ does not yield a power series in  $\ z. $ \\

In summary, the $\ z$-cardinality of a finite 
$\ \mathbb{N}$-graded set is given by
$$\ |(x,w)|_z \ = \ 
\sum_{n=0}^{\infty}|w^{-1}(n)|e^{-nz} \ = \  
\sum_{i\in x}e^{-w(i)z}\ = \ \sum_{n=0}^{\infty}\bigg(\sum_{i\in x}w(i)^n \bigg)\frac{(-z)^n}{n!}  \  \in \ \mathbb{Z}[[z]].$$ For 
$\ z\in \mathbb{R}, \ $ the usual convention is to take $\ z>0,\ $ the sum above produces a real number, however, we emphasize that $\ z$-cardinality makes sense as a power series, i.e. $\ |(x,w)|_z\ $ is an entire analytic function in the variable $\ -z, \ $  furthermore $\ |(x,w)|_0= |x|,\ $ thus  $\ |(x,w)|_z\ $ is a one-parameter analytic deformation of  $\ |x| \ $ with higher order coefficients   $\ |(x,w^n)|.\ $  \\

\begin{exmp}
{\em  $\ \mathrm{Y}(m,n) \ $ is a  $\ \mathbb{N}$-graded set with the map
$\ \mathrm{area}: \mathrm{Y}(m,n) \rightarrow \mathbb{N}.\ $  The $\ q$-cardinality
of $\ (\mathrm{Y}(m,n),\mathrm{area}) \ $ is given by
$$|\mathrm{Y}(m,n)|_q  \ = \ \sum_{\lambda \in \mathrm{Y}(m,n)}q^{\mathrm{area}(\lambda)} \ = \ \sum_{a=m+n-1}^{mn}|\mathrm{Y}(m,n,a)| q^{a}.$$ The polynomials 
$\ |\mathrm{Y}(m,n)|_q \ \in \ \mathbb{N}[q]\ $ are determined by the recursion
$$|\mathrm{Y}(m,1)|_q=q^m \ \ \ \ \ \mbox{and} \ \ \ \ \ 
|\mathrm{Y}(m,n+1)|_q=q^m\sum_{l=1}^{m} |\mathrm{Y}(l,n)|_q.$$
On the other hand the $\ z$-cardinality of $\ (\mathrm{Y}(m,n),\mathrm{area})\ $ is given by
$$|\mathrm{Y}(m,n)|_z \ = \
\sum_{\lambda \in \mathrm{Y}(m,n)}e^{-\mathrm{area}(\lambda)z} \ = \
\sum_{k=0}^{\infty}\bigg(\sum_{\lambda \in \mathrm{Y}(m,n)} \mathrm{area}(\lambda)^k\bigg)\frac{(-z)^k}{k!} , $$
thus $\ |\mathrm{Y}(m,n)|_{z}\ $ is the generating series,
in the variable $ \ -z,\ $ of the sum of powers of the areas of Young diagrams.
}
\end{exmp}

\
  
We call $\ z$-volume our continuous analogue for the notion of $\ z$-cardinality, it is a one parameter deformation of the notion of volume of a manifold built as follows. Let $\ (M,dx,f) \ $ consist of an oriented finite dimensional compact manifold with corners $ \ M,\ $ a volume form $\ dx \ $ on $\ M \ $ compatible with the orientation of $\ M,\ $  and a smooth map $\ f: M \rightarrow \mathbb{R}_{\geq 0}. \ $ The $\ z$-volume 
of $\ (M,dx,f) \ $ is given by
$$ \ \mathrm{vol}_z(M) \ = \ \int_Me^{-zf(x)}dx \ = 
\ \sum_{n=0}^{\infty}\bigg( \int_Mf(x)^ndx \bigg)\frac{(-z)^n}{n!} \ \in
\ \mathbb{R}[[z]].$$
For  $\ z\in \mathbb{R} \ $ the integral above yields a real number, however, we emphasize that these numbers come from a power series, i.e. $ \ \mathrm{vol}_z(M) \ $ is the generating series
for the integrals of powers of $\ f(x),\ $  it is an entire analytic function on the variable $\ -z, \ $  furthermore $ \ \mathrm{vol}_0(M)=\mathrm{vol}(M), \ $
and $ \ \mathrm{vol}_z(M) \leq  \mathrm{vol}(M) \ $ for $\ z\in \mathbb{R}_{\geq 0}. \ $  Note that $\ f \ $ is bounded since it is continuous and $\ M \ $ is compact, therefore, the power series
$\ \displaystyle e^{-zf(x)}= \sum_{n=0}^{\infty} \frac{(-zf(x))^n}{n!} \ $ is absolutely and uniformly convergent. Likewise, the power series for $ \ \mathrm{vol}_z(M) \ $  is absolutely convergent. \\

Recall that $\ \mathrm{area}: \mathrm{D}(x,y) \rightarrow \mathbb{\mathbb{R}}_{\geq 0} \ $
assigns to each diagram the area of its underlying region.\\

\begin{lem}\label{l8}
{\em For $ \  (n,l) \in \mathbb{N}_{\geq 1}\times \mathbb{N} \ $ consider  the  rational number
$$d_{n,l} \ = \ \sum_{l_1+\cdots+l_n=l}
 \frac{1}{\prod_{i=1}^{n}\big(|l|_i+i\big)} \ \ \ \ \mbox{where} \ \ \ l_i\in \mathbb{N}.$$
The numbers $\ d_{n,l} \ $ are determined by 
$$d_{n,0}\ = \ \frac{1}{n!}, \   \ \ \ \ \  d_{1,l}\ = \ \frac{1}{l+1}, \ \ \ \ \ \  \mbox{and}\ \  \ \ \ \
d_{n+1,l}\ = \ \frac{1}{n+l+1}\sum_{i=0}^{l}d_{n,i} .$$ 
We have that
$\ \ \displaystyle d_{n,l} \ \leq \  \frac{1}{n!}\binom{l+n-1}{n-1} \ = \ 
\frac{(l+n-1)!}{n!(n-1)!l!}.$
}
\end{lem}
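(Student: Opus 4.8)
The plan is to verify the three defining relations directly and then obtain the upper bound by a single induction on $n$. For the base cases I would read them off the definition. When $l=0$ the only admissible multi-index is $(0,\dots,0)$, so every partial sum $|l|_i$ vanishes and the denominator collapses to $\prod_{i=1}^{n} i = n!$, giving $d_{n,0}=1/n!$. When $n=1$ there is a single term with $l_1=l$, so $|l|_1=l$ and the denominator equals $l+1$, giving $d_{1,l}=1/(l+1)$.

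For the recursion the key remark is that the final factor of the product, namely $|l|_{n+1}+(n+1)=l+n+1$, is constant over all compositions of $l$ into $n+1$ parts, so I would pull it out of the sum. The surviving factors $\prod_{i=1}^{n}(|l|_i+i)$ depend only on $l_1,\dots,l_n$, so I would stratify the remaining sum by the value $m:=l_1+\cdots+l_n=|l|_n$, which runs over $0\le m\le l$ because $l_{n+1}=l-m\ge 0$. For each fixed $m$ the inner sum over $(l_1,\dots,l_n)$ with total $m$ is exactly $d_{n,m}$, and collecting the strata yields $d_{n+1,l}=\frac{1}{n+l+1}\sum_{m=0}^{l}d_{n,m}$.

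For the bound I would induct on $n$, the base case $n=1$ being $d_{1,l}=1/(l+1)\le 1=\binom{l}{0}/1!$. In the inductive step I would substitute the hypothesis $d_{n,i}\le\frac{1}{n!}\binom{i+n-1}{n-1}$, valid for every $i$, into the recursion and apply the \emph{hockey-stick identity} $\sum_{i=0}^{l}\binom{i+n-1}{n-1}=\binom{l+n}{n}$ to get $d_{n+1,l}\le\frac{1}{(n+l+1)\,n!}\binom{l+n}{n}$. Since $n+l+1\ge n+1$ this is at most $\frac{1}{(n+1)!}\binom{l+n}{n}$, which is precisely the asserted bound $\frac{1}{(n+1)!}\binom{l+(n+1)-1}{(n+1)-1}$ at level $n+1$.

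The computation is short and I do not expect a genuine obstacle; the only point demanding care is the hockey-stick summation together with the attendant index bookkeeping, and making sure the inductive hypothesis is invoked uniformly over all $i\in\{0,\dots,l\}$ rather than at a single value of $l$.
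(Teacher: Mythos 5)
Your proof is correct, but it reaches the bound by a genuinely different route than the paper. The paper does not use the recursion for this purpose at all: it bounds each summand directly, noting that $|l|_i + i \geq i$ for every $i$, so that each term of the sum defining $d_{n,l}$ is at most $1/n!$, and the number of terms --- weak compositions of $l$ into $n$ nonnegative parts --- is $\binom{l+n-1}{n-1}$; the bound follows in one line. (The base cases and the recursion are stated in the paper without proof, evidently regarded as immediate from the definition; your verification of them, pulling out the constant last factor $|l|_{n+1}+(n+1)=l+n+1$ and stratifying by $m=|l|_n$, is exactly the natural argument and is correct.) Your inductive route instead feeds the hypothesis into the recursion and closes it with the hockey-stick identity $\sum_{i=0}^{l}\binom{i+n-1}{n-1}=\binom{l+n}{n}$, then discards the factor $n+l+1$ down to $n+1$. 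Both arguments are valid. The paper's is shorter and independent of the recursion; yours is longer but yields along the way the sharper intermediate estimate $d_{n+1,l} \leq \frac{1}{(n+l+1)\,n!}\binom{l+n}{n}$, which improves on the stated bound whenever $l>0$, and it has the structural merit of exhibiting the bound as a formal consequence of the recursion alone.
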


\begin{proof}
For the latter statement we have that
$$\sum_{l_1+\cdots+l_n=l}
\frac{1}{\prod_{i=1}^{n}\big(|l|_i+i\big)}\ \leq \ \sum_{l_1+\cdots+l_n=l}\frac{1}{\prod_{i=1}^{n}i}\ = \ \frac{1}{n!}\sum_{l_1+\cdots+l_n=l}1 \ = \
\frac{1}{n!}\binom{l+n-1}{n-1}.$$ 
\end{proof}

\

\begin{thm}\label{22}
{\em For $\ x>0,\ y>0 \ $ the $\ z$-volume 
  of  $\ (\mathrm{D}_n(x,y),\ dxdy,\ \mathrm{area}) \ $ is given by $$\ \mathrm{vol}_{z}(\mathrm{D}_1(x,y)) \ = \ e^{-xyz}, \ $$ $$ \mathrm{vol}_z(\mathrm{D}_n(x,y)) \ = \
  x^{n-1}y^{n-1}\sum_{l=0}^{\infty} \frac{(l+n)d_{n,l}}{(l+n-1)!} (-xyz)^l 
  \ \ \ \ \mbox{for} \ \ \ \ n \in \mathbb{N}_{\geq 2}.$$
}  
\end{thm}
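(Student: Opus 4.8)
The plan is to handle $n=1$ directly and to reduce the general case to the moment integrals $M_l=\int_{\mathrm{D}_n(x,y)}\mathrm{area}^l$, which can be evaluated by the two integration lemmas already available. For $n=1$ the identification $\mathrm{P}_1(x,y)\simeq\Delta^0_x\times\Delta^0_y$ shows that $\mathrm{D}_1(x,y)$ is a single point, namely the rectangle $[0,x]\times[0,y]$ of area $xy$; integrating $e^{-z\,\mathrm{area}}$ over a point gives $e^{-xyz}$ immediately.

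For $n\ge 2$ I would expand $\mathrm{vol}_z(\mathrm{D}_n(x,y))=\int_{\mathrm{D}_n(x,y)}e^{-z\,\mathrm{area}}$ as a power series in $-z$. Since $\mathrm{D}_n(x,y)\simeq\Delta^{n-1}_x\times\Delta^{n-1}_y$ is compact and $\mathrm{area}$ is continuous, the series converges absolutely and term-by-term integration is justified by the general discussion of $z$-volume, so it suffices to compute $M_l$ for each $l$. In Cartesian coordinates with the conventions $x_n=x,\ y_0=0,\ y_n=y$, one has $\mathrm{area}=\sum_{i=1}^n x_i(y_i-y_{i-1})$. Applying the multinomial theorem to $\mathrm{area}^l$ turns $M_l$ into a finite sum over $a\in\mathbb{N}^n$ with $|a|=l$ of $\binom{l}{a_1,\dots,a_n}$ times a product of two decoupled integrals, one in the variables $x_i$ and one in the $y_i$.

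The key step is to recognize these two integrals as the ones already computed. The $y$-integral $\int_{\Delta^{n-1}_y}\prod_{i=1}^n (y_i-y_{i-1})^{a_i}$ is exactly the difference-monomial integral of Lemma \ref{l5} (with $k=n-1$ and the $n$ exponents $a_1,\dots,a_n$), giving $\tfrac{a_1!\cdots a_n!}{(l+n-1)!}\,y^{l+n-1}$. For the $x$-integral I would first pull out the fixed factor $x_n^{a_n}=x^{a_n}$ and then apply the plain-monomial formula of Lemma \ref{l1} (with $k=n-1$) to $x_1^{a_1}\cdots x_{n-1}^{a_{n-1}}$, obtaining $\tfrac{x^{l+n-1}}{\prod_{i=1}^{n-1}(|a|_i+i)}$. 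The pleasant simplification is that $\binom{l}{a_1,\dots,a_n}\,a_1!\cdots a_n!=l!$, so the factorials collapse and $M_l=\tfrac{l!\,(xy)^{l+n-1}}{(l+n-1)!}\sum_{|a|=l}\prod_{i=1}^{n-1}(|a|_i+i)^{-1}$.

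It then remains to reconcile the surviving sum with $d_{n,l}$: the product above runs only up to $i=n-1$, whereas $d_{n,l}$ of Lemma \ref{l8} runs it up to $i=n$. Since $|a|_n=l$ for every $a$ with $|a|=l$, the missing factor is the constant $|a|_n+n=l+n$, so $\sum_{|a|=l}\prod_{i=1}^{n-1}(|a|_i+i)^{-1}=(l+n)\,d_{n,l}$, whence $M_l=\tfrac{l!\,(l+n)d_{n,l}}{(l+n-1)!}(xy)^{l+n-1}$. Substituting into the exponential expansion and using $(xy)^l(-z)^l=(-xyz)^l$ yields the stated formula, and the $l=0$ term recovers $\mathrm{vol}(\mathrm{D}_n(x,y))=(xy)^{n-1}/((n-1)!)^2$ as a consistency check. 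I expect the only genuine bookkeeping hurdle to be this index reconciliation between the product up to $n-1$ and the definition of $d_{n,l}$; everything else is a direct application of Lemmas \ref{l1} and \ref{l5} together with the multinomial cancellation.
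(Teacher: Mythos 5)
Your proposal is correct and follows essentially the same route as the paper's proof: expand $e^{-z\,\mathrm{area}}$ in powers of $-z$, decouple the $x$- and $y$-integrals via the multinomial theorem, evaluate them by Lemma \ref{l1} and Lemma \ref{l5} respectively, and absorb the constant factor $|a|_n+n=l+n$ to recognize $(l+n)d_{n,l}$. The only cosmetic difference is that the paper converts the $y$-factors to divided powers $(y_i-y_{i-1})^{(l_i)}$ and uses the second identity of Lemma \ref{l5}, whereas you use the plain-power identity and cancel the factorials against the multinomial coefficient directly; the arithmetic is identical.
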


\begin{proof} 
The space $\ \mathrm{D}_1(x,y) \ $ has a unique point, namely, the diagram of area $\ xy \ $ associated to the path $ \ (0,0)\rightarrow (x,0) \rightarrow (x,y). \ $ Integration is  evaluation, thus
$\ \mathrm{vol}_{z}(\mathrm{D}_1(x,y)) =e^{-xyz}. \ $ For $\ n \in \mathbb{N}_{\geq 2} \ $ we have that
$$ \ \mathrm{vol}_z(\mathrm{D}_n(x,y)) \ = \ \int_{\mathrm{D}_n(x,y)}e^{-z  \ \mathrm{area}}\ dxdy \ = \
\int_{\Delta^{n-1}_x \times \Delta^{n-1}_y}e^{-z\big(\sum_{i=1}^{n}x_i(y_i-y_{i-1}) \big)}dxdy \ = $$
$$ \sum_{l=0}^{\infty} \bigg( \sum_{l_1+\cdots+l_n=l}\binom{l}{l_1,...,l_n}
\bigg( x^{l_n}\int_{\Delta^{n-1}_x }\prod_{i=1}^{n-1}x_i^{l_i}dx \bigg)   
\bigg( \int_{\Delta^{n-1}_y }\prod_{i=1}^{n}(y_i-y_{i-1})^{l_i}dy \big)\bigg)\bigg) \frac{(-z)^l}{l!} \ = $$
$$ \sum_{l=0}^{\infty} \bigg( \sum_{l_1+\cdots+l_n=l}l!
x^{l_n}\bigg(\int_{\Delta^{n-1}_x }\prod_{i=1}^{n-1}x_i^{l_i}dx \bigg)   
\bigg( \int_{\Delta^{n-1}_y }\prod_{i=1}^{n}(y_i-y_{i-1})^{(l_i)}dy \big)\bigg) \bigg)\frac{(-z)^l}{l!} \ = $$
$$ \sum_{l=0}^{\infty} \bigg( \sum_{l_1+\cdots+l_n=l}
 \frac{x^{l+n-1}}{\prod_{i=1}^{n-1}\big(|l|_i+i\big)}\frac{y^{l+n-1}}{(l+n-1)!}\bigg) (-z)^l \ = $$
$$ x^{n-1}y^{n-1}\sum_{l=0}^{\infty}\bigg( \sum_{l_1+\cdots+l_n=l}
 \frac{1}{\prod_{i=1}^{n-1}\big(|l|_i+i\big)}\bigg) \frac{1}{(l+n-1)!}  (-xyz)^l\ = \ $$
$$  x^{n-1}y^{n-1}\sum_{l=0}^{\infty} \frac{(l+n)d_{n,l}}{(l+n-1)!} (-xyz)^l.$$ 
\end{proof}

\

\begin{thm}
{\em For $\ x>0,\ y>0 \ $ the $\ t$-volume 
 of  $\ (\mathrm{D}(x,y),\ dxdy,\ \mathrm{area}) \ $ is given by
$$ \mathrm{vol}_{z}(\mathrm{D}(x,y))  \ = \ e^{-xyz} \ + \ \sum_{k=0}^{\infty}(k+1)\bigg(\sum_{l=0}^{k-1} d_{k-l+1,l} (-z)^l\bigg)\frac{(xy)^{k}}{k!}.$$
}  
\end{thm}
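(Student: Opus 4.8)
The plan is to combine the additivity of the $z$-volume over disjoint unions with the per-piece formulas from Theorem \ref{22}. Since $\mathrm{D}(x,y) = \coprod_{n\geq 1}\mathrm{D}_n(x,y)$ and the $z$-volume is given by an integral, the first step is to write $\mathrm{vol}_z(\mathrm{D}(x,y)) = \sum_{n=1}^{\infty}\mathrm{vol}_z(\mathrm{D}_n(x,y))$, isolate the $n=1$ contribution $e^{-xyz}$, and substitute the $n\geq 2$ formula to obtain
$$\mathrm{vol}_z(\mathrm{D}(x,y)) = e^{-xyz} + \sum_{n=2}^{\infty}(xy)^{n-1}\sum_{l=0}^{\infty}\frac{(l+n)d_{n,l}}{(l+n-1)!}(-xyz)^l.$$

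Writing $u = xy$, the remaining object is the double series $\sum_{n\geq 2}\sum_{l\geq 0}\frac{(l+n)d_{n,l}}{(l+n-1)!}\,u^{n-1+l}(-z)^l$, in which the monomial $u^{n-1+l}$ suggests the reindexing $k = n-1+l$, i.e. $n = k-l+1$. Under this change the condition $n\geq 2$ becomes $l\leq k-1$ and $l\geq 0$ forces $k\geq 1$; moreover $l+n = k+1$ and $l+n-1 = k$, so the coefficient collapses to
$$\frac{(l+n)d_{n,l}}{(l+n-1)!} = \frac{(k+1)d_{k-l+1,l}}{k!}.$$
Grouping the terms according to the total power $u^k$ then yields $\sum_{k\geq 1}(k+1)\big(\sum_{l=0}^{k-1}d_{k-l+1,l}(-z)^l\big)\frac{u^k}{k!}$, which is precisely the claimed sum (the $k=0$ term on the right being an empty inner sum, hence zero).

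The main obstacle is justifying the two rearrangements involved---the termwise summation of the $\mathrm{vol}_z(\mathrm{D}_n(x,y))$ and the reindexing of the double series---both of which require absolute convergence. Here I would invoke the estimate $d_{n,l}\leq \frac{1}{n!}\binom{l+n-1}{n-1}$ from Lemma \ref{l8}, which gives $\frac{(l+n)d_{n,l}}{(l+n-1)!}\leq \frac{l+n}{n!\,(n-1)!\,l!}$. Summing absolute values factors cleanly: the inner sum evaluates via $\sum_{l\geq 0}\frac{(l+n)(u|z|)^l}{l!} = (u|z|+n)e^{u|z|}$, and the outer sum $\sum_{n\geq 2}\frac{u^{n-1}(u|z|+n)}{n!\,(n-1)!}$ converges by the factorial growth of its denominator. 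This absolute convergence legitimizes both rearrangements via Fubini, so the theorem follows once the coefficient identity above is recorded.
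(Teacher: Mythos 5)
Your proposal is correct and follows essentially the same route as the paper: decompose $\mathrm{D}(x,y)$ into the pieces $\mathrm{D}_n(x,y)$, isolate the $n=1$ term, substitute the formula of Theorem \ref{22}, reindex the resulting double series by $k=n-1+l$ so that the coefficient collapses to $\frac{(k+1)d_{k-l+1,l}}{k!}$, and invoke the bound $d_{n,l}\leq \frac{1}{n!}\binom{l+n-1}{n-1}$ of Lemma \ref{l8} for absolute convergence. Your justification of the rearrangement is in fact slightly more explicit than the paper's, which records only the termwise estimate without carrying out the summation of absolute values.
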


\begin{proof} By Theorem \ref{22} we have that
$$ \ \mathrm{vol}_z(\mathrm{D}(x,y)) \ = \  
\mathrm{vol}_z(\coprod_{n\geq 1}\mathrm{D}_n(x,y)) \ = \ 
 \sum_{n\geq 1}\mathrm{vol}_z(\mathrm{D}_n(x,y))
 \ = $$ 
 $$e^{-xyz} \ + \ \sum_{n=2}^{\infty}\mathrm{vol}_z(\mathrm{D}_n(x,y))\ = \
1 \ + \ \sum_{n=2}^{\infty}x^{n-1}y^{n-1}\sum_{l=0}^{\infty} \frac{(l+n)d_{n,l}}{(l+n-1)!} (-xyz)^l \ = $$
$$e^{-xyz} \ + \ \sum_{n=1}^{\infty}\sum_{l=0}^{\infty} \frac{(l+n+1)d_{n+1,l}}{(l+n)!}  x^{l+n}y^{l+n}(-z)^l.$$

\

\noindent  By Lemma \ref{l8} the sum above is absolutely convergent since

$$\frac{(l+n+1)d_{n+1,l}}{(l+n)!}x^{l+n}y^{l+n}z^l
\ \leq \ \frac{(l+n+1)}{n!\ n! \ l!} 
 (xy)^{n}(xyz)^l \ = \ (l+n+1)
 \frac{(xy)^{n}}{n! n!}\frac{(xyz)^l}{l!}.$$

\

\noindent Setting $\ k=n+l \ $ in the series above we get that 

$$ \mathrm{vol}_z(\mathrm{D}(x,y)) \ = \  e^{-xyz} \ + \ \sum_{k=1}^{\infty}(k+1)\bigg(\sum_{l=0}^{k-1} d_{k-l+1,l}(-z)^l \bigg)\frac{(xy)^{k}}{k!}. $$

\

\noindent Finally, we note  that $ \ \mathrm{vol}_z(\mathrm{D}(x,y)) \ $ may alternatively be written as 
$$\mathrm{vol}_z(\mathrm{D}(x,y)) \ = \  e^{-xyz} \ + \ 
\sum_{l=0}^{\infty} \bigg( \sum_{n=1}^{\infty}
\frac{(l+n+1)d_{n+1,l}(xy)^{n+l}}{(l+n)_{(n)}}\bigg)\frac{(-z)^l}{ l!} .$$

\end{proof}

\begin{cor}
{\em For $\ x>0,\ y>0, \  z\geq 0 \ $ the following inequalities hold
$$\ \ \mathrm{vol}_z(\mathrm{D}(x,y)) \ \leq \  
I_0(2\sqrt{xy}) \ \ \ \ \  \mbox{and} \ \ \ \ \ 
  \mathrm{vol}_{z}(\mathrm{D}(\frac{x}{2},\frac{x}{2}))  \ \leq \ I_0(x). $$
}  
\end{cor}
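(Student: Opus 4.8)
The plan is to exploit the elementary pointwise bound built into the very definition of $z$-volume. Since the area functional is non-negative on every diagram and $z \geq 0$, we have $e^{-z\,\mathrm{area}} \leq 1$ at every point of the domain of integration. The first step is therefore to integrate this inequality over each finite-dimensional piece $\mathrm{D}_n(x,y)$, which gives $\mathrm{vol}_z(\mathrm{D}_n(x,y)) \leq \mathrm{vol}(\mathrm{D}_n(x,y))$. This is precisely the general observation $\mathrm{vol}_z(M) \leq \mathrm{vol}(M)$ for $z \in \mathbb{R}_{\geq 0}$ recorded right after the definition of $z$-volume, applied here to $M = \mathrm{D}_n(x,y)$ with $f = \mathrm{area}$; for $n=1$ it is the trivial bound $e^{-xyz} \leq 1$.

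The second step is to sum over $n \geq 1$ along the disjoint-union decomposition $\mathrm{D}(x,y) = \bigsqcup_{n\geq 1}\mathrm{D}_n(x,y)$. Because every summand is non-negative, the inequality is preserved termwise, yielding
$$\mathrm{vol}_z(\mathrm{D}(x,y)) = \sum_{n\geq 1}\mathrm{vol}_z(\mathrm{D}_n(x,y)) \leq \sum_{n\geq 1}\mathrm{vol}(\mathrm{D}_n(x,y)) = \mathrm{vol}(\mathrm{D}(x,y)).$$
Invoking Theorem \ref{t2}-f), which identifies $\mathrm{vol}(\mathrm{D}(x,y)) = I_0(2\sqrt{xy})$, establishes the first claimed inequality. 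For the second inequality I would simply specialize the first to $x \to \tfrac{x}{2}$ and $y \to \tfrac{x}{2}$ and compute $I_0(2\sqrt{(x/2)(x/2)}) = I_0(x)$.

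There is no genuine obstacle here; the content is almost entirely the monotonicity $e^{-z\,\mathrm{area}}\leq 1$ together with the already-established closed form for $\mathrm{vol}(\mathrm{D}(x,y))$. The only point deserving a word of care is the interchange of the inequality with the infinite sum over pieces of varying dimension, but this is immediate since each summand is non-negative and the majorizing series $\sum_{n\geq 1}\mathrm{vol}(\mathrm{D}_n(x,y))$ converges to $I_0(2\sqrt{xy})$ by Theorem \ref{t2}. This same convergence also guarantees that the left-hand side is a well-defined finite quantity for every $z \geq 0$.
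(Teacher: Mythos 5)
Your proposal is correct and follows essentially the same route as the paper: the termwise bound $\mathrm{vol}_z(\mathrm{D}_n(x,y)) \leq \mathrm{vol}(\mathrm{D}_n(x,y))$ (the general fact $\mathrm{vol}_z(M)\leq \mathrm{vol}(M)$ for $z\geq 0$, coming from $e^{-z\,\mathrm{area}}\leq 1$), summed over the disjoint union $\coprod_{n\geq 1}\mathrm{D}_n(x,y)$ and combined with the identity $\mathrm{vol}(\mathrm{D}(x,y)) = I_0(2\sqrt{xy})$ from Theorem \ref{t2}-f). Your explicit remark that the second inequality is the specialization $x\mapsto \tfrac{x}{2}$, $y\mapsto \tfrac{x}{2}$ is left implicit in the paper but is exactly what is intended.
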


\begin{proof} 
We have that
$$ \ \mathrm{vol}_z(\mathrm{D}(x,y)) \ = \  
\mathrm{vol}_z(\coprod_{n\geq 1}\mathrm{D}_n(x,y)) \ = \ 
 \sum_{n\geq 1}\mathrm{vol}_z(\mathrm{D}_n(x,y))
 \ \leq \ $$
 $$ \sum_{n\geq 1}\mathrm{vol}(\mathrm{D}_n(x,y)) \ = \
  \mathrm{vol}(\mathrm{D}(x,y)) \ = \  I_0(2\sqrt{xy}).$$

\end{proof}

\section{Open problems}

A diagram in $ \  \mathrm{YD}(m,n,a) \ $ represents a partition of $ \ a \ $ with
width  $\ m \ $  and  height (and thus number of blocks) $\ n. \ $ On the continuous side height and number of blocks need not be equal, so we are led to consider the spaces $\ \mathrm{D}_n(x,y,a) \ $ with fix width $\ x, \ $ height $\ y,\ $ number of blocks $\ n, \ $ and area $\ a.$\\

\begin{prop}
{\em For $\ x>0, \ y>0,\ n\geq 2 \ $ the smooth map 
$\ \mathrm{area}: \mathrm{D}_n(x,y) \longrightarrow \mathbb{R}_{\geq 0}\ $
has a unique critical point 
$\ (x,..., x \ ; \ 0, ..., 0) \ \in \ \Delta^{n-1}_x \times \Delta^{n-1}_y 
\ = \ \mathrm{D}_n(x,y).$
  
}
\end{prop}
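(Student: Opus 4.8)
The plan is to compute the gradient of $\mathrm{area}$ in the Cartesian coordinates $(x_1,\ldots,x_{n-1}\,;\,y_1,\ldots,y_{n-1})$ on $\Delta^{n-1}_x\times\Delta^{n-1}_y=\mathrm{D}_n(x,y)$ and to show it vanishes at exactly one point of the polytope. Starting from $\mathrm{area}=\sum_{i=1}^n x_i(y_i-y_{i-1})$ with the conventions $x_0=0,\ x_n=x,\ y_0=0,\ y_n=y$, each variable $x_j$ with $1\le j\le n-1$ occurs only in the single summand $x_j(y_j-y_{j-1})$, while each $y_j$ occurs in the two summands indexed by $i=j$ and $i=j+1$. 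First I would record the resulting partial derivatives
$$\frac{\partial\,\mathrm{area}}{\partial x_j}=y_j-y_{j-1},\qquad \frac{\partial\,\mathrm{area}}{\partial y_j}=x_j-x_{j+1},\qquad 1\le j\le n-1,$$
taking care that for $j=n-1$ the second formula uses the constant $x_n=x$.

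Next I would solve $\nabla\,\mathrm{area}=0$. The equations $y_j-y_{j-1}=0$ together with $y_0=0$ force, by an upward induction on $j$, that $y_1=\cdots=y_{n-1}=0$; dually, the equations $x_j-x_{j+1}=0$ together with $x_n=x$ force, by a downward induction on $j$, that $x_1=\cdots=x_{n-1}=x$. This singles out $(x,\ldots,x\,;\,0,\ldots,0)$ as the only candidate, and I would then verify that it satisfies the defining chains $0\le x_1\le\cdots\le x_{n-1}\le x$ and $0\le y_1\le\cdots\le y_{n-1}\le y$, so that it genuinely lies in $\mathrm{D}_n(x,y)$; in fact it is the vertex corresponding to the full rectangle of area $xy$.

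It is worth recording the sign structure, both as a check and as the geometric content: since the $y_i$ are non-decreasing we have $\partial\,\mathrm{area}/\partial x_j=y_j-y_{j-1}\ge 0$, and since the $x_i$ are non-decreasing we have $\partial\,\mathrm{area}/\partial y_j=x_j-x_{j+1}\le 0$ throughout the polytope. Hence the gradient can vanish only when every one of these differences is zero, and away from the critical point the area strictly increases as the $x_i$ grow and the $y_i$ shrink, consistent with the global maximum $xy$ being attained at the claimed vertex.

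The step needing the most care is conceptual rather than computational: because the critical point is a corner of the manifold with corners $\Delta^{n-1}_x\times\Delta^{n-1}_y$, one must fix what ``critical point'' means. The statement, and its uniqueness, hold precisely under the convention that a critical point is a point of $\mathrm{D}_n(x,y)$ at which the full ambient gradient vanishes; I would flag that the stratified alternative, requiring only that the differential of the restriction to each open face vanish, fails here, since $\mathrm{area}$ equals $xy$ identically on the face $\{x_1=\cdots=x_{n-1}=x\}$, rendering every point of that face critical in the stratified sense. Once the convention is pinned down, the gradient computation above closes the argument immediately.
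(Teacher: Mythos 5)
Your proposal is correct and takes essentially the same route as the paper: compute $\partial\,\mathrm{area}/\partial x_j = y_j-y_{j-1}$ and $\partial\,\mathrm{area}/\partial y_j = x_j-x_{j+1}$ in the Cartesian coordinates on $\Delta^{n-1}_x\times\Delta^{n-1}_y$, set $d(\mathrm{area})=0$, and conclude that the unique solution is $(x,\ldots,x\,;\,0,\ldots,0)$. Your extra remarks (the sign structure of the gradient and the need to fix a convention for ``critical point'' at a corner of a manifold with corners, since the stratified notion would make the whole face $\{x_1=\cdots=x_{n-1}=x\}$ critical) are sound clarifications of points the paper leaves implicit, but they do not alter the argument.
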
 

\begin{proof}
Recall that $\  \mathrm{area} \ = \ \sum_{i=1}^nx_i(y_i-y_{i-1}),\  $
thus we have that
$$d(\mathrm{area}) \ = \ \sum_{i=1}^{n-1}\frac{\partial \mathrm{area} }{\partial x_i}dx_i \ + \ \sum_{i=1}^{n-1}\frac{\partial \mathrm{area} }{\partial y_i}dy_i \ =  \
\sum_{i=1}^{n-1}(y_i-y_{i-1})dx_i \ + \ \sum_{i=1}^{n-1}(x_i-x_{i+1})dy_i.$$
Thus $\ d(\mathrm{area})=0 \ \ $ if and only if $ \ \ x_1=\cdots = x_{n-1}=x \ \ $ and $\ \ y_1=\cdots = y_{n-1}=0. \ $
\end{proof} 
 
\
 
Note that  $\ \ \mathrm{area}^{-1}(0)\ = \ \bigcup_{l=0}^{n-1} \mathrm{area}_l^{-1}(0) \ \ \ \  \mbox{where} \ \ $  $$(x_1,...x_{n-1}\ ; \ y_1,...y_{n-1}) \in \mathrm{area}_l^{-1}(0) \ \ \ \ \mbox{if and only if} \ \ \ $$
$$y_{n-1}=y, \ \ \ \  \ x_i=0 \ \ \ \mbox{if} \ \ \ i\leq l,\ \ \ \ \ \mbox{and} \ \ \ \ \ y_i=y_{i-1} \ \ \ \mbox{if} \ \ \ i>l. $$
So $\ \mathrm{area}^{-1}(0) \ $ is included in the codimension $\ n \ $ boundary of $\ \mathrm{D}_n(x,y) . \ $  Furthermore the critical point 
$\ (x,..., x \ ; \ 0, ..., 0) \ $ lies in  $ \ \mathrm{area}^{-1}(0). \ $

\

Similarly
$\ \ \ \mathrm{area}^{-1}(xy)\ = \   \bigcup_{l=1}^{n}\mathrm{area}_l^{-1}(xy) \ \ \ \ \  \   \mbox{where}  $  $$(x_1,...,x_{n-1}\ ; \ y_1,...,y_{n-1}) \in \mathrm{area}_l^{-1}(xy) \ \ \ \ \mbox{if and only if}$$
$$x_{i}=x \ \ \ \mbox{if} \ \ \ i\geq l \ \ \ \ \mbox{and} \ \ \ \ 
y_{i}=y_{i-1} \ \ \ \mbox{if} \ \ \ i< l. $$
So $\ \mathrm{area}^{-1}(xy) \ $ is included in the codimension $\ n-1 \ $ boundary of $\ \mathrm{D}_n(x,y) . \ $  

\

For $\ 0< a < xy,\ $ since there are no critical points in 
$\ \mathrm{D}_n(x,y,a) = \mathrm{area}^{-1}(a),\ $ this set is the closure of a smooth codimension one submanifold of  $ \ \mathrm{D}_n(x,y)\ $ in the open part, and as such it acquires a Riemannian metric and an orientation.
Thus $ \  \mathrm{vol}(\mathrm{D}_n(x,y,a))\ $ measures continuous diagrams of fix width, height, area, and number of blocks. Within  our settings an analogue for $ \  |\mathrm{YD}(m,n,a)| \ $ should be given by the sum 
$\ \displaystyle  \mathrm{vol}(\mathrm{D}(x,y,a))  =  \sum_{n=1}^{\infty} \mathrm{vol}(\mathrm{D}_n(x,y,a)), \ $ however, weather this sum is convergent or not is left open.\\

For $\ 0 < w \leq xy\ $ set  $\ \widehat{\mathrm{D}}_n(x,y,w) = \mathrm{area}^{-1}[0,w].\ $ Note that $\ \widehat{\mathrm{D}}_n(x,y,w) \ $ is 
the closure of the  open region $\ \mathrm{area}^{-1}(0,w) \ $ in $\ \mathrm{D}_n(x,y), \ $ therefore
$$\mathrm{vol}(\widehat{\mathrm{D}}_n(x,y,w))  \ \leq \  \mathrm{vol}(\mathrm{D}_n(x,y)) \ = \ \frac{x^{n-1}}{(n-1)!}\frac{y^{n-1}}{(n-1)!}, $$ and the following function is well defined
$$\mathrm{vol}(\widehat{\mathrm{D}}(x,y,w)) \ = \ \mathrm{vol}(\coprod_{n=0}^{\infty}\widehat{\mathrm{D}}_n(x,y,w)) \ = \ 
\sum_{n=1}^{\infty}\mathrm{vol}(\widehat{\mathrm{D}}_n(x,y,w)).$$
Note that  $\ \mathrm{vol}(\widehat{\mathrm{D}}(x,y,w)) \ $ may be regarded as continuous analogue for $\ \sum_{l=n+m-1}^{a}\big|\mathrm{Y}(n,m,l)\big|, \  $ the number of partitions of integers less than or equal to $\ a, \ $ with width $\ n,\ $ and largest block $\ m. \ $ 
A detailed study of  $\ \mathrm{vol}(\mathrm{D}_n(x,y,w)), \ \mathrm{vol}(\mathrm{D}(x,y,a)), \   \mathrm{vol}(\widehat{\mathrm{D}}_n(x,y,w)) \ $ and $\ \mathrm{vol}(\widehat{\mathrm{D}}(x,y,w)) $ is left for the future.

\

\

\

\noindent ragadiaz@gmail.com\\
\noindent Departamento de Matem\'aticas \\
\noindent Universidad Nacional de Colombia - Sede Medell\' in\\
\noindent Colombia\\

\end{document}